\theoremstyle{definition} 
\newtheorem{coro}{Corollary}
\newtheorem{lem}{Lemma}
\newtheorem{prop}{Proposition}
\newtheorem{thm}{Theorem}
\newtheorem*{thm*}{Theorem}
\newtheorem{theom}{Theorem}
\newtheorem*{lem*}{Lemma}
\newtheorem*{prop*}{Proposition}
\newtheorem*{coro*}{Corollary}
\newcounter{cnt}
\def\mydggeometry{\makeatletter\dg@YGRID=1\dg@XGRID=20\unitlength=0.003pt\makeatother}
\makeatother \theoremstyle{remark}
\numberwithin{equation}{section}
\newcommand{\nc}{\newcommand}
\nc{\lie}[1]{\mathfrak{#1}} \nc\bp{\mathbf{p}} \nc\bq{\mathbf{q}} \nc\mD{\mathbb{D}} \nc\bs{\mathbf{s}} \nc\bt{\mathbf{t}} \nc\bz{\mathbb{Z}} \nc\bc{\mathbb C} 
\nc\Max{\operatorname{Max}}
\begin{document}
\author[Fourier]{Ghislain Fourier}
\address{Mathematisches Institut, Universit\"at Bonn}
\address{School of Mathematics and Statistics, University of Glasgow}
\email{ghislain.fourier@glasgow.ac.uk}

\thanks{G.F. has been supported by the DFG priority program ''Representation Theory'' 1388}

\subjclass[2000]{Primary: 52C99, 14M25, 06A07, 52B20, 14M15, 17B10}
\keywords{PBW filtration, Polytope, Marked Poset, Chain Polytope}
\date{\today}

\title[Marked posets]{Marked poset polytopes: Minkowski sums, indecomposables, and unimodular equivalence}
\begin{abstract}
We analyze marked poset polytopes and generalize a result due to Hibi and Li, answering whether the marked chain polytope  is unimodular equivalent to the marked order polytope. Both polytopes appear naturally in the representation theory of semi-simple Lie algebras, and hence we can give a necessary and sufficient condition on the marked poset such that the associated toric degenerations of the corresponding partial flag variety are isomorphic.\\
We further show that the set of lattice points in such a marked poset polytope is the Minkowski sum of sets of lattice points for 0-1 polytopes. Moreover, we provide a decomposition of the marked poset into indecomposable marked posets, which respects this Minkowski sum decomposition for the marked chain polytopes polytopes.

\end{abstract}
\maketitle

\section{Introduction and main results}
Let $(\mathcal{P}, \prec)$ be a finite poset and $A$ subset that contains at least all maximal and all minimal elements of $\mathcal{P}$. We set
\[
\mathcal{Q}_A := \{ \lambda \in \bz_{\geq 0}^{A} \mid \lambda_a \leq \lambda_b \text{ if } a \prec b \}
\]
and call the triple $(\mathcal{P}, A, \lambda)$ a \textit{marked poset}. Then the marked chain polytope associated to $\lambda \in \mathcal{Q}_A$ is defined in \cite{ABS11}:
\[
\mathcal{C}(\mathcal{P}, A)_\lambda := \{ \bs \in \mathbb{R}_{\geq 0}^{\mathcal{P}\setminus A} \mid \bs_{x_1} + \ldots +  \bs_{x_n} \leq \lambda_b - \lambda_a \text{ for all chains }  a \prec x_1 \prec \ldots \prec x_n \prec b\},
\]
while the marked order polytope is defined as
\[
\mathcal{O}(\mathcal{P}, A)_\lambda := \{ \bs \in \mathbb{R}_{\geq 0}^{\mathcal{P}\setminus A} \mid \bs_{x} \leq  \bs_{y}\, , \, \lambda_{a} \leq  \bs_{x} \leq \lambda_{b}\, , \, \text{ for all } a \prec x \prec b, x \prec y \},
\]
where $a,b\in A, x_i,x,y \in \mathcal{P}\setminus A$. This generalizes the notion of chain and order polytopes due to Stanley \cite{Sta86}, where $A$ consists exactly of all extremal elements and $\lambda_b = 1$ for all maximal elements, $\lambda_a = 0$ for all minimal elements.\\
If $\lambda = c \mathbf{e}_A$, e.g. $\lambda_a = \lambda_b = c$, then the marked chain polytope is just the origin, while the marked order polytope is just the point $\mathbf{e}_{\mathcal{P}\setminus A}$. 
Suppose $c = \operatorname{min } \{ \lambda_a | a \in A \}$, then the marked poset polytopes associated with $(\mathcal{P}, A, \lambda)$ are just affine translations of the marked poset polytopes associated with $(\mathcal{P}, A, \lambda - c \mathbf{e}_A)$.
So for the study of marked poset polytopes, it is enough to consider the following marking vectors: 
\[
(\mathcal{Q}_A )_0 = \{ \lambda \in \mathcal{Q}_A \mid \exists \; a \in A\, : \, \lambda_a = 0 \}.
\]

We call the marked poset $(\mathcal{P}, A, \lambda)$ regular if for all $a \neq b$  in $A :\lambda_a \neq \lambda_b$ and there are no \textit{obviously redundant} relations (see Section~\ref{Section-Facets} for details). Any given non-regular marked poset can be transformed into a regular marked poset by transforming certain cover relations, the transformations of the associated marked poset polytopes are nothing but affine translations.\\

The first relation between the two marked poset polytopes we should point out here, is due to Ardila-Bliem-Salazar, who showed that both polytopes have the same Ehrhart polynomial, hence do have the same number of lattice points \cite{ABS11}. They provide a piecewise linear bijection (unfortunately, in terms of the representation theoretical interpretation (Section~\ref{Section-RT}), this map does not preserve the weight of a lattice point).\\
Let $(\mathcal{P}, A, \lambda)$ be a regular marked poset, then the number of facets in the marked order polytope is equal to the number of cover relations in $\mathcal{P}$ (Lemma~\ref{lem-num-facets}). Further, if we denote for each pair $a \prec b$ in $A$ the number of saturated chains $a \prec x_1 \prec \ldots \prec x_p \prec b$, $x_i \in \mathcal{P}\setminus A$, by $c(a,b)$, then the number of facets in the marked chain polytope is equal 
\[
|\mathcal{P}\setminus A| + \sum_{a \prec b} c(a,b).
\]
We say that a poset has a \textit{star relation} if there is a subposet whose Hasse diagram is of the form (recall that we draw a downward arrow from $y$ to $x$ if $x \prec y$)\\
\begin{figure}[ht]
\begin{picture}(300,55)
\put(125,10){$\circ$}
\put(132,49){$\line(1,-1){10}$}
\put(132,17){$\line(1,1){10}$}
\put(125,50){$\circ$}
\put(145,30){$\circ$}
\put(163,17){$\line(-1,1){10}$}
\put(165,10){$\circ$}
\put(165,50){$\circ$}
\put(163,49){$\line(-1,-1){10}$}
\end{picture}
\caption{The star relation}
\end{figure}
\begin{theom}
Let $(\mathcal{P}, A, \lambda)$ be a regular marked poset, then 
\[
\sharp \{ \text{ facets in } \mathcal{O}(\mathcal{P}, A)_\lambda \} \leq \sharp \{ \text{ facets in } \mathcal{C}(\mathcal{P}, A)_\lambda \}
\]
and equality if and only if $\mathcal{P}$ has no star relation.
\end{theom}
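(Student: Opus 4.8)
The plan is to turn both facet counts into purely combinatorial quantities attached to the unmarked elements and then compare them by a single telescoping identity. By Lemma~\ref{lem-num-facets} the facets of $\mathcal{O}(\mathcal{P},A)_\lambda$ are in bijection with the cover relations of $\mathcal{P}$ that carry a genuine inequality; since a cover between two marked elements only reproduces $\lambda_a\le\lambda_b$ (redundant in the regular case), these are exactly the covers having at least one endpoint in $\mathcal{P}\setminus A$. Sorting them by type I would write
\[
\sharp\{\text{facets of }\mathcal{O}\}=\sum_{x\in\mathcal{P}\setminus A}\bigl(m^-(x)+m^+(x)\bigr)+E_{uu},
\]
where $m^-(x)$ (resp. $m^+(x)$) counts the marked elements covered by (resp. covering) $x$, and $E_{uu}$ is the number of covers internal to $\mathcal{P}\setminus A$. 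On the chain side each facet is either a non-negativity facet $\bs_x\ge 0$ or a saturated-chain inequality, so that $\sharp\{\text{facets of }\mathcal{C}\}=|\mathcal{P}\setminus A|+\sum_{a\prec b}c(a,b)$, and I would record that a saturated chain $a\lessdot x_1\lessdot\dots\lessdot x_p\lessdot b$ is exactly an ascending path $x_1\lessdot\dots\lessdot x_p$ inside $\mathcal{P}\setminus A$ whose bottom covers a marked element and whose top is covered by one; hence $\sum_{a\prec b}c(a,b)=\sum_P m^-(x_1)\,m^+(x_p)$, the sum over all such unmarked paths.

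The key device is the statistic $f(x)=\sharp\{\text{saturated chains }a\lessdot x_1\lessdot\dots\lessdot x_k=x,\ a\in A,\ x_i\in\mathcal{P}\setminus A\}$, i.e.\ the number of maximal unmarked descending chains from $x$ down to a marked element. It satisfies $f(x)=m^-(x)+\sum_{y\lessdot x,\,y\notin A}f(y)$, and grouping the chain count by its top unmarked vertex gives $\sum_{a\prec b}c(a,b)=\sum_{x}f(x)\,m^+(x)$. Summing the recursion over all unmarked $x$ and reindexing the double sum by the lower vertex yields $\sum_x f(x)\bigl(1-u^+(x)\bigr)=\sum_x m^-(x)$, where $u^+(x)$ is the number of unmarked elements covering $x$. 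Substituting this for $\sum_x m^-(x)$ in the difference of the two facet counts, and using $m^+(x)+u^+(x)=d^+(x)$ (the up-degree of $x$ in the Hasse diagram) together with $E_{uu}=\sum_x u^+(x)$, every global term telescopes and I am left with
\[
\sharp\{\text{facets of }\mathcal{C}\}-\sharp\{\text{facets of }\mathcal{O}\}=\sum_{x\in\mathcal{P}\setminus A}\bigl(f(x)-1\bigr)\bigl(d^+(x)-1\bigr).
\]

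Positivity is then immediate: since $A$ contains all minimal elements, every unmarked $x$ lies above a marked element, so $f(x)\ge 1$; since $A$ contains all maximal elements, every unmarked $x$ is covered by something, so $d^+(x)\ge 1$. Hence each summand is non-negative and the inequality follows. For the equality case I would argue that the right-hand side vanishes if and only if no unmarked $x$ has both $f(x)\ge 2$ and $d^+(x)\ge 2$, and identify this with the absence of a star relation. Here $d^+(x)\ge 2$ furnishes two incomparable elements $b_1,b_2$ covering $x$, while $f(x)\ge 2$ means two saturated unmarked descending chains from $x$ split at some unmarked vertex $w\preceq x$, producing two incomparable elements $a_1,a_2\prec x$; the induced subposet on $\{a_1,a_2,x,b_1,b_2\}$ is then a star with (unmarked) center $x$, and conversely a star relation must force such an $x$.

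The step I expect to be the genuine obstacle is this last translation, precisely because the marking cannot be ignored: a five-element induced subposet shaped like a star need not make the difference positive if its branchings occur at marked elements (for instance $a_1,a_2\lessdot c\lessdot m$ with $m\lessdot b_1,b_2$ and only $c$ unmarked induces a star yet contributes $0$). What $f(x)\ge 2$ and $d^+(x)\ge 2$ really detect is an unmarked ascending chain whose bottom has two lower covers and whose top has two upper covers, so the careful point is to verify that the definition of star relation in Section~\ref{Section-Facets} captures exactly this configuration, i.e.\ that the branch vertices may be taken unmarked, so that ``$\mathcal{P}$ has a star relation'' is equivalent to the non-vanishing of a summand above. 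Establishing this equivalence, tracking which of the five nodes are forced to be marked or unmarked, is where the real work lies; the rest is the bookkeeping of the telescoping identity.
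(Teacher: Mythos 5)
Your telescoping identity is correct, and it is a genuinely different route from the paper's. The paper proves the inequality by induction on $|\mathcal{P}|$: it deletes a minimal unmarked element $x$, compares the loss of cover relations ($1+|D|$, for a suitable set $D$ of upper covers of $x$) with the loss of saturated chains, the key estimate being the inequality \eqref{d-eq}, and then analyzes the equality case step by step in Corollary~\ref{cor-facet}. Your approach instead produces the closed formula
\[
\sharp \{ \text{ facets in } \mathcal{C}(\mathcal{P}, A)_\lambda \} - \sharp \{ \text{ facets in } \mathcal{O}(\mathcal{P}, A)_\lambda \} \; = \sum_{x \in \mathcal{P}\setminus A} \bigl(f(x)-1\bigr)\bigl(d^+(x)-1\bigr),
\]
and I have checked the bookkeeping (the decomposition of $h(\mathcal{P})$, the recursion $f(x)=m^-(x)+\sum_{y\lessdot x,\,y\notin A}f(y)$, the reindexing, and the final factorization): it is right, and together with $f(x)\geq 1$, $d^+(x)\geq 1$ it gives the inequality at once. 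This is sharper than the paper's argument, since it localizes the discrepancy at individual unmarked elements; your derivation of "strict inequality $\Rightarrow$ star relation" (split the two descending chains at an unmarked vertex) is also sound.

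The genuine gap is the converse direction, "star relation $\Rightarrow$ strict inequality," which you explicitly leave open -- and your worry about markings is not a technicality but fatal to the literal statement. Your own five-element example is not regular (in a regular marked poset an unmarked element covers at most one marked element, and covers between two marked elements are removed), but a regular counterexample exists: take marked $a_1,a_2,m,b_1,b_2$ with markings $0,1,2,3,4$ and unmarked $u_1,u_2,c,v_1,v_2$, with covers $a_i \lessdot u_i \lessdot m$, $m \lessdot c$, $c \lessdot v_i \lessdot b_i$. One checks this marked poset is regular, and the induced subposet on $\{u_1,u_2,c,v_1,v_2\}$ has the star Hasse diagram; yet both polytopes have exactly $9$ facets, in agreement with your formula (the only unmarked vertex with two upper covers is $c$, and $f(c)=1$ because the downward branching happens at the marked element $m$). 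So under the reading "star relation $=$ any five-element induced subposet shaped like a star," the equality criterion is simply false, and no amount of case analysis will close your gap. What your identity proves is the corrected criterion: the counts differ if and only if there are unmarked elements $w \preceq x$, joined by a saturated chain of unmarked elements, with $w$ having two lower covers and $x$ having two upper covers (equivalently, a star whose spine is unmarked, while the four leaves may be marked or not). To finish, you must either adopt this as the definition of star relation or prove it is what the paper intends; note that the paper's own proof of Corollary~\ref{cor-facet} has the same soft spot (the unjustified reduction "we can assume that $x$ covers $x_1$ and $x_2$," with $x, x_1, x_2$ implicitly unmarked), so your formula in fact exposes and repairs an imprecision in the source rather than merely omitting a routine step.
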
\label{main-thm}
This, and the following theorem on equivalent statements on the marked poset polytopes generalizes results due to Hibi and Li (\cite{HL12}) for the special case $\lambda_b \in \{0,1\}$ for all $b \in A$:
\begin{theom}\label{thm-equiv} Let $(\mathcal{P}, A, \lambda)$ be a regular marked poset, then the following are equivalent
\begin{enumerate}
\item $\mathcal{O}(\mathcal{P}, A)_\lambda $ and $\mathcal{C}(\mathcal{P}, A)_\lambda$ are unimodular equivalent.
\item $\mathcal{O}(\mathcal{P}, A)_\lambda $ and $\mathcal{C}(\mathcal{P}, A)_\lambda$ have the same f-vector, e.g. the number of $i$-dimensional faces in both polytopes is the same.
\item $\mathcal{O}(\mathcal{P}, A)_\lambda $ and $\mathcal{C}(\mathcal{P}, A)_\lambda$ have the sum number of facets.
\item $\mathcal{P}$ has no star relation.
\end{enumerate}
\end{theom}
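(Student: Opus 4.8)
The plan is to prove the cycle $(1)\Rightarrow(2)\Rightarrow(3)\Rightarrow(4)\Rightarrow(1)$, in which only the final implication carries real content. The implication $(1)\Rightarrow(2)$ is immediate, since a unimodular equivalence is in particular an affine isomorphism and hence induces an isomorphism of face lattices, preserving the whole $f$-vector. The implication $(2)\Rightarrow(3)$ is equally immediate, as the number of facets is the entry $f_{d-1}$ of the $f$-vector, where $d=|\mathcal{P}\setminus A|$ is the common dimension of the two polytopes. For $(3)\Rightarrow(4)$ I would invoke the preceding theorem, which gives $\sharp\{\text{facets of }\mathcal{O}(\mathcal{P},A)_\lambda\}\le\sharp\{\text{facets of }\mathcal{C}(\mathcal{P},A)_\lambda\}$ with equality exactly when $\mathcal{P}$ has no star relation; equality of the two facet numbers therefore forces the absence of a star relation. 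Everything thus reduces to constructing a unimodular equivalence under hypothesis $(4)$.

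For $(4)\Rightarrow(1)$ the natural tool is the Ardila--Bliem--Salazar transfer map $\Phi\colon\mathcal{O}(\mathcal{P},A)_\lambda\to\mathcal{C}(\mathcal{P},A)_\lambda$, given by $\bt_x=\bs_x-\max_{y\lessdot x}\hat\bs_y$, where $\hat\bs_y=\bs_y$ for $y\in\mathcal{P}\setminus A$ and $\hat\bs_a=\lambda_a$ for $a\in A$. This is a lattice-preserving piecewise-linear bijection, and on each chamber of linearity, fixing at every down-branching element which lower cover attains the maximum, it is an integer matrix that is unipotent with respect to any linear extension of $\mathcal{P}$, hence unimodular. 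The task is to upgrade this chamberwise statement to a single unimodular map, and this is exactly where the absence of a star relation should enter: a star relation is the minimal pattern in which a maximum taken at a down-branching element is fed forward into two distinct upper covers, forcing genuinely different linear pieces that cannot be reconciled.

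I would not, however, expect a single global choice of transfer direction to suffice: even without a star relation one can produce an element $x$ with two lower covers and unique upper cover $u$, with $u$ having unique lower cover $x$, and the forced orientations $x\to u\to x$ already yield a singular $2\times 2$ block. The route I would therefore take is induction on $|\mathcal{P}\setminus A|$. Since the no-star condition is hereditary, one can peel off an element $x_0$ that branches in at most one direction; removing it leaves a smaller star-free marked poset to which the inductive hypothesis applies, and I would reconstruct the full polytopes as integral prisms (respectively shears) over the smaller ones along the coordinate $\bs_{x_0}$, extending the unimodular equivalence across this coordinate by a lattice-preserving shear.

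The step I expect to be the main obstacle is precisely this inductive extension: showing that the added coordinate $\bs_{x_0}$ can be incorporated so that the extended map remains unimodular and carries $\mathcal{O}(\mathcal{P},A)_\lambda$ exactly onto $\mathcal{C}(\mathcal{P},A)_\lambda$. Here I would check the extension facet-by-facet, using that the facets of $\mathcal{O}(\mathcal{P},A)_\lambda$ are indexed by the cover relations (Lemma~\ref{lem-num-facets}) and those of $\mathcal{C}(\mathcal{P},A)_\lambda$ by the saturated chains counted by $|\mathcal{P}\setminus A|+\sum_{a\prec b}c(a,b)$; under $(4)$ these counts agree, providing the bijection of defining inequalities that the constructed map must realize, and the absence of a star relation is what guarantees that each chain facet of $\mathcal{C}(\mathcal{P},A)_\lambda$ is matched by a single cover-relation facet of $\mathcal{O}(\mathcal{P},A)_\lambda$ rather than being split across several linearity chambers.
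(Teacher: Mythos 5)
Your implications $(1)\Rightarrow(2)\Rightarrow(3)$ and $(3)\Rightarrow(4)$ are fine and agree with the paper: the first two are formal, and the third is exactly the content of the facet-counting results (Theorem~\ref{main-thm}, proved via Corollary~\ref{cor-facet}). The genuine gap is in $(4)\Rightarrow(1)$, the only implication with real content, and your proposal does not prove it. The inductive ``prism/shear'' construction is never carried out: you yourself flag the extension of the map over the peeled-off coordinate $\bs_{x_0}$ as ``the main obstacle'' and leave it unresolved, and the closing remark that the facet counts can be matched bijectively is no substitute, since a bijection between sets of defining inequalities does not by itself produce a lattice-preserving affine map (that is precisely what the theorem asserts). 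Note also that deleting one element does not in general exhibit $\mathcal{C}(\mathcal{P},A)_\lambda$ as a prism or shear over $\mathcal{C}(\mathcal{P}\setminus\{x_0\},A)_\lambda$: the chain inequalities through $x_0$ couple $\bs_{x_0}$ to all coordinates along every chain containing it.

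Worse, the reason you abandon the direct global approach rests on a false obstruction --- and the direct approach is exactly the paper's proof. In your scenario $x_1,x_2\lessdot x\lessdot u$ with $x$ the unique lower cover of $u$: the incomparable pair $x_1,x_2$ also lies below $u$, so the no-star hypothesis forces any two elements above $x$, and any two elements above $u$, to be comparable. Hence both $x$ and $u$ have a \emph{unique} saturated chain upward to some marked element $b$, and the correct rule orients both of them upward,
\[
\Psi(\bs_x)=\lambda_b-\bs_x-\bs_u-\cdots,\qquad \Psi(\bs_u)=\lambda_b-\bs_u-\cdots,
\]
so there is no circular dependency $x\to u\to x$: with respect to any linear extension the system is triangular with diagonal entries $\pm1$, hence unimodular. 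The point you missed is that the orientation of an element must be dictated by the branching of saturated chains to and from $A$ (orient $x$ upward when the elements below $x$ branch, downward when the elements above branch), not by its local cover structure; the no-star condition guarantees that no element needs both orientations, and one checks that whenever an up-oriented coordinate involves a higher element $u$, that $u$ is itself up-oriented, so no singular block can form. This single global affine map $\Psi$, defined case by case (maximal, minimal, unique chain up, unique chain down), is what the paper constructs in Section~\ref{Section-Uni}; it carries $\mathcal{C}(\mathcal{P},A)_\lambda$ onto $\mathcal{O}(\mathcal{P},A)_\lambda$, and no induction is needed.
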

Certainly, we have to prove $(3) \Rightarrow (4) \Rightarrow (1)$ only, this will be done in Section~\ref{Section-Facets} and Section~\ref{Section-Uni}.

\noindent
In the second part of the paper we are mainly interested in the lattice points of the marked poset polytopes:
\[
S_{\mathcal{C}}(\lambda) := \mathcal{C}(\mathcal{P}, A)_\lambda \cap \bz^{\mathcal{P}\setminus A} \, , \,  S_{\mathcal{O}}(\lambda) := \mathcal{O}(\mathcal{P}, A)_\lambda \cap \bz^{\mathcal{P}\setminus A}.
\]
$\lambda \in (\mathcal{Q}_A)_0$ is called  $\mathcal{C}$-\textit{indecomposable} if for all $\tau, \mu \in \mathcal{Q}_A$:
\[
S_{\mathcal{C}}(\lambda) = S_{\mathcal{C}}(\mu) + S_{\mathcal{C}}(\tau) \Rightarrow \mu = 0 \vee \tau = 0,
\]
To characterize the indecomposable marked chain polytopes we introduce the notion of reduced poset. Let $\lambda \in \mathcal{Q}_A$, then we denote the full subposet
\[
\overline{(\mathcal{P}, A, \lambda)} = \{ x \in \mathcal{P} \mid \not\exists a \in A \text{ s.t. } \lambda_a = 0 \text{ and } x \preceq a \}.
\]
So we subtract from $\mathcal{P}$ all elements which are bounded above by $0$, especially we subtract all elements from $A$ which are marked by $0$. Note that $\overline{(\mathcal{P}, A, \lambda)}$ is not a marked poset anymore. 
See the following example, the marked poset and the reduced poset:
\begin{figure}[ht]
\begin{picture}(450,100)
\put(95,100){$1$} 
\put(100,98){$\line(1,-1){10}$}
\put(110,82){$x$}
\put(115,80){$\line(1,-1){10}$}

\put(125,59){$0$}
\put(127,57){$\line(0,-1){10}$}
\put(125,38){$w$}
\put(127,36){$\line(0,-1){10}$}
 \put(125,17){$0$}

\put(157,98){$\line(-1,-1){10}$}
\put(155,100){$1$}
\put(160,98){$\line(1,-1){10}$}
\put(142,80){$\line(-1,-1){10}$}
\put(141,82){$y$}
\put(187,98){$\line(-1,-1){10}$}
\put(170,82){$z$}
\put(172,80){$\line(0,-1){10}$}

\put(185,100){$1$}\put(170,59){$0$}
\put(215, 50){$\Longrightarrow$}\put(255,60){$1$}\put(255,42){$x$}\put(257,58){$\line(0,-1){10}$}\put(310,58){$\line(1,-1){10}$}
\put(305,60){$1$}
\put(307,58){$\line(-1,-1){10}$}
\put(290,42){$y$}
\put(320,42){$z$}
\put(337,58){$\line(-1,-1){10}$}
\put(335,60){$1$}
\end{picture}\\
\caption{The reduced poset}
\end{figure}

The other main result in this paper is the following:
\begin{theom}\label{main-thm2}
Let $\lambda \in \mathcal{Q}(A)_0$, then the following are equivalent:
\begin{enumerate}
\item $\lambda$ is $\mathcal{C}$-indecomposable.
\item $\lambda_a \in \{0,1\}$ for all $a\in A$ and the Hasse diagram of the reduced poset $\overline{(\mathcal{P}, A, \lambda)}$ is connected.
\end{enumerate}
\end{theom}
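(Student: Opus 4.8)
The plan is to prove the two implications separately, working throughout with decompositions $\lambda=\mu+\tau$ (so that the inclusion $S_{\mathcal C}(\mu)+S_{\mathcal C}(\tau)\subseteq S_{\mathcal C}(\lambda)$ is automatic and $\mathcal C$-indecomposability asks whether the reverse inclusion forces $\mu=0$ or $\tau=0$), and relying on the layered Minkowski description of the lattice points from the first part of the paper. The basic observation is that when $\lambda_a\in\{0,1\}$ every chain bound $\lambda_b-\lambda_a$ lies in $\{0,1\}$, so each point of $S_{\mathcal C}(\lambda)$ is a $0$--$1$ vector whose support meets every saturated chain $a\prec x_1\prec\cdots\prec x_n\prec b$ with $\lambda_a<\lambda_b$ in at most one element and avoids every chain with $\lambda_a=\lambda_b$ entirely. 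I will also use repeatedly that a coordinate $x$ with $x\preceq a$ for some $a$ with $\lambda_a=0$ is forced to $\bs_x=0$ on $S_{\mathcal C}(\lambda)$ (pick a minimal $a'\preceq x$; then $\lambda_{a'}=0$ and the chain through $x$ has bound $0$), and that by nonnegativity such coordinates vanish on both summands; hence everything is read off the reduced poset $\overline{(\mathcal P,A,\lambda)}$.

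For $(1)\Rightarrow(2)$ I argue by contraposition and exhibit explicit decompositions $\lambda=\mu+\tau$ with both summands nonzero. If some $\lambda_a\ge2$, I take $\mu:=\min(\lambda,1)$ and $\tau:=\lambda-\mu=\max(\lambda-1,0)$; both lie in $\mathcal Q_A$ and are nonzero, and the layered Minkowski theorem gives $S_{\mathcal C}(\lambda)=S_{\mathcal C}(\mu)+S_{\mathcal C}(\tau)$ after grouping the first layer against the rest, so $\lambda$ is decomposable. If instead $\lambda$ is $0$--$1$ valued but the Hasse diagram of $\overline{(\mathcal P,A,\lambda)}$ splits into a disjoint union of two nonempty parts, I colour the value-$1$ marked elements of the first part $\mu$ and those of the second part $\tau$ (setting the summand to $0$ on the part not containing a given value-$1$ element); since a chain between two surviving value-$1$ marks stays inside $\overline{(\mathcal P,A,\lambda)}$, no comparability crosses the partition and $\mu,\tau\in\mathcal Q_A$ with $\mu+\tau=\lambda$. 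Every unit chain then lives in a single part, the live coordinates split accordingly, and the first main theorem yields $S_{\mathcal C}(\lambda)=S_{\mathcal C}(\mu)+S_{\mathcal C}(\tau)$ with both factors nonzero. Thus failure of $(2)$ forces decomposability.

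For the substantive direction $(2)\Rightarrow(1)$, assume $\lambda_a\in\{0,1\}$, the reduced poset connected, and $\lambda=\mu+\tau$ with $S_{\mathcal C}(\lambda)=S_{\mathcal C}(\mu)+S_{\mathcal C}(\tau)$. Then $\mu,\tau$ are $0$--$1$ valued with disjoint supports, and I colour each value-$1$ marked element by the summand carrying its $1$. For a live coordinate $x$ (one with $\max_{S_{\mathcal C}(\lambda)}\bs_x=1$) the indicator $\mathbf 1_{\{x\}}$ lies in $S_{\mathcal C}(\lambda)=S_{\mathcal C}(\mu)+S_{\mathcal C}(\tau)$, and being $0$--$1$ it must equal $\mathbf 1_{\{x\}}+0$ or $0+\mathbf 1_{\{x\}}$; thus $x$ is \emph{exclusively} $\mu$-live or $\tau$-live, which defines its colour. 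I then propagate colour along the Hasse diagram by three local rules: comparable value-$1$ marks share a colour because $\mu_u\le\mu_v$, $\tau_u\le\tau_v$ and $\mu+\tau\equiv1$ on them; a value-$1$ mark covering a live $x$ has the colour of $x$, since the unit chain witnessing $x$'s liveness forces that top to carry the corresponding $1$; and comparable live coordinates share a colour, for otherwise a chain through both would have $\mu$-bound and $\tau$-bound each $\ge1$, giving $\lambda$-bound $\ge2$, impossible. A forced-zero coordinate of $\overline{(\mathcal P,A,\lambda)}$ sits on a chain between two value-$1$ marks of equal value, hence equal colour, so it transmits rather than breaks colour. Connectivity of the Hasse diagram therefore forces a single colour, i.e.\ $\mu=0$ or $\tau=0$, proving $\mathcal C$-indecomposability.

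The hard part will be the propagation step in the last paragraph: converting connectedness of the Hasse diagram of $\overline{(\mathcal P,A,\lambda)}$ into constancy of the colouring. The care is needed because the colouring is defined on the value-$1$ marks and on the live coordinates, while the Hasse diagram may also contain forced-zero unmarked vertices, and because the chain constraints are indexed by all of $A$---including the $0$-marked elements deleted when forming the reduced poset---so I must verify that every Hasse edge actually lies on a chain transmitting the colour constraint. A secondary technical point is to fix the normalization $\mu+\tau=\lambda$ and to cite the first main theorem for the precise splittings used in $(1)\Rightarrow(2)$; once these are in place the three local rules close up and the argument is complete.
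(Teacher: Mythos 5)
The decisive gap is in the first case of your $(1)\Rightarrow(2)$ argument. When some $\lambda_a\ge 2$, you take $\mu=\min(\lambda,1)$ and $\tau=\lambda-\mu$ and then invoke a ``layered Minkowski theorem'' to conclude $S_{\mathcal C}(\lambda)=S_{\mathcal C}(\mu)+S_{\mathcal C}(\tau)$. No such theorem is available: nothing of this kind is proved in the sections on facets and unimodular equivalence, and the Minkowski decomposition announced in the introduction is a result of this very paper, established only in Section~\ref{Section-Inde} and itself deduced by iterating precisely the statement you need here --- so the appeal is circular. As you note, the inclusion $S_{\mathcal C}(\mu)+S_{\mathcal C}(\tau)\subseteq S_{\mathcal C}(\lambda)$ is automatic, but the reverse inclusion is the technical heart of the whole theorem and must be proved. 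The paper does this as follows: given $\bs\in S_{\mathcal C}(\lambda)$, set $M=\{p\in\mathcal P\setminus A\mid \bs_p\neq 0\}\cup\{b\in A\mid\lambda_b\neq 0\}$, let $\bt$ be the indicator vector of the minimal elements of $M$ lying in $\mathcal P\setminus A$, and verify by a chain-by-chain case analysis that $\bt\in S_{\mathcal C}(\mu)$ and $\bs-\bt\in S_{\mathcal C}(\tau)$; the delicate case is a chain from a mark with $\lambda_a=0$ to a mark with $\lambda_b\neq 0$ whose lowest nonzero entry $b_i$ is not minimal in $M$, where one passes to an auxiliary chain through the element of $M$ below $b_i$ to gain the required strict inequality. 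Without this construction (or a substitute), your proof establishes $(1)\Rightarrow(2)$ only for $0$--$1$ valued $\lambda$, and the theorem is unproved.

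The rest of your plan is essentially sound. Your disconnected case of $(1)\Rightarrow(2)$ is the paper's argument (with $\mu,\tau$ the restrictions of $\lambda$ to the two parts); your citation there of ``the first main theorem'' is another misattribution --- that theorem counts facets --- but your own sketch (comparable surviving marks lie in one component, so unit chains stay in one part and a lattice point splits by restriction) is the correct and complete idea, matching the paper's Proposition. Your colour-propagation proof of $(2)\Rightarrow(1)$ is a legitimate variant of the paper's: the paper instead shows that $\overline{(\mathcal P,A,\mu)}$ and $\overline{(\mathcal P,A,\tau)}$ are disjoint, upward closed, and cover $\overline{(\mathcal P,A,\lambda)}$, using the indicator vector $\bt^x$ of a minimal uncovered element exactly as you use indicators of live coordinates, and this covering forces disconnectedness. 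Your three local rules do close up once the edge cases with forced-zero vertices are written out as you indicate (in particular, in the reduced poset a forced-zero unmarked vertex is exactly one lying above a value-$1$ mark, so such vertices are upward closed among unmarked elements and inherit a well-defined colour from the marks comparable to them); so that direction only needs the bookkeeping you already flag as the hard part.
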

The proof will be given in Section~\ref{Section-Inde}. We further provide a decomposition $\mu_1 + \ldots + \mu_s$ of any given $\lambda \in \mathcal{Q}_A$ such that 
\[
S_{\mathcal{C}}(\lambda) = \sum S_{\mathcal{C}}(\mu_i) 
\]
and each summand corresponds to the lattice points of an indecomposable marked chain polytope. We see that is sufficient to compute the lattice points for the indecomposable marked chain polytopes and ''control'' the Minkowski sum of these lattice points.\\ 
In \cite{ABS11}, a piecewise linear bijection between marked order and marked chain polytopes has been provided, using this bijection one obtains now the lattice points of the marked order polytope.\\
Our main motivation for this last result comes from the representation theory of semi-simple Lie algebras, as we will explain in Section~\ref{Section-RT}. The idea of this Minkowski sum decomposition is basically the reduction of the computation of a monomial basis (parametrized by lattice points in a marked chain polytope) of an arbitrary simple representation to minimal representations whose monomial bases should be parametrized by lattice points in an indecomposable marked chain polytope.

\textbf{Acknowledgments} The author is funded by the DFG priority program ''Representation theory'' and would like to thank the University Bonn as well as the University of Cologne for their hospitality.

\section{Motivation from representation theory}\label{Section-RT}
We recall here several interesting examples of marked poset polytopes, arising in representation theory. For this let us recall briefly the PBW filtration and the associated degenerations. Let $\lie u$ be a Lie algebra with universal enveloping algebra  $U(\lie u)$. The PBW filtration on $U(\lie u)$ is given by
\[
U(\lie u)_s := \langle x_{i_1} \cdots x_{i_\ell} \mid x_{i_j} \in \lie u\, , \, \ell \leq s \rangle.
\] 
The associated graded algebra is isomorphic to the symmetric algebra of the vector space $\lie u$, $\operatorname{gr} U(\lie u) = S(\lie u)$. Let $M$ be a $\lie u$-module and $G \subset M$ a generating set, then the PBW filtration induces a filtration of $M$
\[
M_s := U(\lie u)_s. G
\]
whose associated graded module is a module for $S(\lie u)$ (and not for $\lie u$, unless $\lie u$ acts commutative of $M$). \\
This construction has been studied for affine Lie algebras in\cite{Fei09, CF13, FM14}). 
We will focus here on finite dimensional, simple Lie algebras with triangular decomposition $\lie g = \lie n^+ \oplus \lie h \oplus \lie n^-$ and finite-dimensional, simple modules $V(\lambda)$, where $\lambda$ is the highest weight. Then $V(\lambda) = U(\lie n^-).v_\lambda$, where $v_\lambda$ is a highest weight vector.\\ 
Further, if $v_{w \lambda}$ is an extremal weight vector, then the Demazure module $V_w(\lambda)$ is defined as the $\lie n^+ \oplus \lie h$-submodule generated by $v_{w\lambda}$. In any case, the associated graded module will be denoted $\operatorname{gr} M$ and it is either a $S(\lie n^-)$-module or a $S(\lie n^+ \oplus \lie h)$-module.\\
In the framework of PBW graded modules for simple Lie algebras (initiated in \cite{FFoL11a}, for quantum groups in \cite{FFR15}), one important issue is the construction of a monomial basis of the associated graded module with a ''nice'' combinatorial description in terms of lattice points in a normal polytope (see \cite{FFoL11a, FFoL11b, G11, BD14, BF14, Fou14b, Fou14}). \\
On the other hand, a connection of the graded character to symmetric MacDonald polynomials has been conjectured in \cite{CF13} and partially established in \cite{FM14} (see also \cite{BBDF14} for the degree of the Hilbert-Poincar\'e polynomial).\\
In \cite{Fei12}, this filtration has been used to construct degenerations of flag varieties and corresponding desingularizations \cite{FF13, FFL11}. 
Following this approach, in \cite{FFoL13a}, the term of a favourable module has been introduced, important geometric properties of the module are governed by a normal polytope.  We recall here in which cases marked chain polytopes arise in the context of PBW degenerations.

\subsection{The \texorpdfstring{$\lie{sl}_{n+1}$}{special linear Lie algebra}-case}
For each $n$ and $\lambda = (\lambda_1 \geq \ldots \geq \lambda_n \geq 0)$, let $(\mathcal{P}, A, \lambda)$ be the marked poset with Hasse diagram Figure~\ref{fig-sln}.\\
\begin{figure}[ht]
\begin{picture}(200,180)
\put(5,10){$0$}\put(20,10)
{$\bullet$}\put(20,40){$\bullet$}
\put(5,40){$\lambda_n$}
\put(20,70){$\vdots$}
\put(20,100){$\bullet$}
\put(5,100){$\lambda_3$}
\put(20,130){$\bullet$}
\put(5,130){$\lambda_2$}
\put(20,160){$\bullet$}
\put(5,160){$\lambda_1$}
\put(40,25){$\circ$}\put(40,55){$\circ$}\put(40,85){$\circ$}\put(40,115){$\circ$}\put(40,145){$\circ$}
\put(60,40){$\circ$}\put(60,70){$\circ$}\put(60,100){$\circ$}\put(60,130){$\circ$}
\put(80,55){$\circ$}\put(80,85){$\circ$}\put(80,115){$\circ$}
\put(100,70){$\circ$}\put(100,100){$\circ$}
\put(120,85){$\circ$}
\put(25,160){$\line(4,-3){15}$}\put(25,130){$\line(4,-3){15}$}\put(25,100){$\line(4,-3){15}$}\put(25,40){$\line(4,-3){15}$}
\put(45,145){$\line(4,-3){15}$}\put(45,115){$\line(4,-3){15}$}\put(45,85){$\line(4,-3){15}$}\put(45,55){$\line(4,-3){15}$}
\put(65,130){$\line(4,-3){15}$}\put(65,100){$\line(4,-3){15}$}\put(65,70){$\line(4,-3){15}$}
\put(85,85){$\line(4,-3){15}$}\put(85,115){$\line(4,-3){15}$}
\put(105,100){$\line(4,-3){15}$}
\put(60,130 ){$\line(-4,-3){15}$}\put(60,100 ){$\line(-4,-3){15}$}\put(60,70){$\line(-4,-3){15}$}\put(60,40 ){$\line(-4,-3){15}$}
\put(40,145 ){$\line(-4,-3){15}$}\put(40,115 ){$\line(-4,-3){15}$}\put(40,55 ){$\line(-4,-3){15}$}\put(40,25 ){$\line(-4,-3){15}$}
\put(80,115){$\line(-4,-3){15}$}\put(80,85 ){$\line(-4,-3){15}$}\put(80,55 ){$\line(-4,-3){15}$}
\put(100,100 ){$\line(-4,-3){15}$}\put(100,70 ){$\line(-4,-3){15}$}
\put(120,85){$\line(-4,-3){15}$}
\end{picture}
\caption{The $\lie{sl}_{n+1}$-poset}\label{fig-sln}
\end{figure}
To be precise, let $\mathcal{P} = \{ x_{i,j} \mid 0 \leq i \leq n, 1 \leq j \leq n+1 - i \}$ and cover relations $x_{i-1, j+1} \geq x_{i,j} \geq x_{i-1, j}$. Let $\lambda = (\lambda_1 \geq \ldots \geq \lambda_n \geq \lambda_{n+1} = 0)$ and let $x_{0,j}$ be marked with $\lambda_j$. \\
We have (Lemma~\ref{lem-num-facets}):
\[
|\{ \textit{facets in the marked order polytope} \} |= n(n+1)
\]
\[
|\{ \textit{facets in the marked chain polytope} \} | = n(n-1)/2 + \sum_{i=1}^{n} {i} C_{n-i}
\]
where $C_{n-i}$ is the Catalan number.\\

The associated marked order polytope is known as the Gelfand-Tsetlin polytope associated with the partition $\lambda$ (\cite{GT50}). 
The set of lattice points in this polytope parametrizes a basis of the simple $\lie{sl}_{n+1}$-module with highest weight $\lambda$ (the basis is obtained by restricting $V(\lambda)$ to a specific subalgebra isomorphic to $\lie{sl}_n$, and iterating this to $\lie{sl}_2$).\\

The marked chain polytope parametrizes a basis of the associated PBW-graded module $\operatorname{gr}V(\lambda)$ (\cite{FFoL11a}).\\

Both polytopes are normal and induce flat degenerations of (partial) flag varieties $SL_{n+1}/P_{\lambda}$ for the parabolic subalgebra associated to $\lambda$  (see \cite{FFoL13a} for details on the degenerated flag varieties associated to the marked chain polytope and \cite{AB04} for the degenerated flag varieties associated to the marked oder polytopes). 
It is very natural to ask wether the two degenerated toric flag varieties are isomorphic, and it has been shown, by providing some examples using \textit{polymake} \cite{GJ00}, that this is not true in general. Finally, by using Theorem~\ref{thm-equiv}, we can be very precise:
\begin{prop}
The two toric degenerations of $SL_n/P_\lambda$ are isomorphic if and only if one of the following is satisfied (set $m_i := \lambda_i - \lambda_{i+1}$):
\begin{enumerate}
\item $m_i = 0$ for $i=3, \ldots, n$.
\item $m_i = 0$ for $i=1, \ldots, n-2$.
\item $m_i = 0$ for $i=2, \ldots, n-1$.
\end{enumerate}
\end{prop}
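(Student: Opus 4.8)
The plan is to turn this geometric statement into the combinatorial criterion of Theorem~\ref{thm-equiv} and then read that criterion off the explicit $\lie{sl}_{n+1}$-poset of Figure~\ref{fig-sln}. First I would record the standard dictionary from toric geometry: since both $\mathcal{O}(\mathcal{P},A)_\lambda$ and $\mathcal{C}(\mathcal{P},A)_\lambda$ are normal lattice polytopes sitting in the same lattice $\bz^{\mathcal{P}\setminus A}$, the two toric degenerations of $SL_{n+1}/P_\lambda$ they define are isomorphic as polarized toric varieties if and only if the two polytopes are unimodularly equivalent. Thus, by Theorem~\ref{thm-equiv}, the degenerations are isomorphic exactly when the regular form of the marked poset attached to $\lambda$ has no star relation. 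Writing $J=\{i\mid m_i\neq 0\}\subseteq\{1,\dots,n\}$, one checks immediately that the three listed conditions are precisely $J\subseteq\{1,2\}$, $J\subseteq\{n-1,n\}$ and $J\subseteq\{1,n\}$, so the whole statement reduces to the assertion: the $\lie{sl}_{n+1}$-poset has no star relation if and only if $J$ lies in one of these three families.

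Second I would make the marking regular. The crucial observation is that $m_i=0$ means two adjacent marks on the boundary chain coincide, which forces the interlacing entry between them to be constant and, propagating through the interlacing, freezes an entire sub-triangle of the poset. Absorbing these frozen entries into the marking and deleting the resulting obviously redundant cover relations — exactly the transformations of Section~\ref{Section-Facets} — produces a regular marked poset whose combinatorial type depends only on $J$. Concretely, an entry of the triangle survives as a free vertex of the regular poset precisely when the consecutive boundary marks it interlaces between are not all equal, i.e.\ when the interval of indices it spans meets $J$; in this way $J$ determines the ``free region'' of the regularized poset.

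Finally I would prove the dichotomy by analyzing this free region. For the three exceptional families the free region degenerates to a thin shape — a width-two strip along one corner when $J\subseteq\{1,2\}$ or $J\subseteq\{n-1,n\}$, and the two extreme boundary diagonals when $J\subseteq\{1,n\}$ — and a direct inspection shows that no vertex acquires two genuine lower and two genuine upper covers, so no star relation occurs. Conversely, for every other $J$ I would exhibit an explicit star: if $J$ contains indices $a<b$ with $2\le a$ and $b\le n-1$, the entry $x_{b-a+1,a}$ is an interior center all four of whose covers are free; the remaining cases, forced to involve the boundary indices $1$ or $n$, are handled by a center one of whose covers runs into a frozen entry that survives regularization as a genuine marked cover, using the symmetry $m_i\leftrightarrow m_{n+1-i}$ to halve the work. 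I expect the main obstacle to be exactly this last bookkeeping: a cover pointing at a frozen, hence absorbed, neighbour is sometimes a genuine cover of the regular poset and sometimes redundant, according to whether a second cover detours past it. It is precisely this sensitivity that makes the boundary indices $1$ and $n$ behave differently from interior indices, and thereby singles out the three pairs $\{1,2\}$, $\{n-1,n\}$ and $\{1,n\}$.
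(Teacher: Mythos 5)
Your overall route is exactly the paper's: identify isomorphism of the polarized toric degenerations with unimodular equivalence of the two polytopes, invoke Theorem~\ref{thm-equiv} to reduce to the absence of a star relation, translate the three conditions into $J\subseteq\{1,2\}$, $J\subseteq\{n-1,n\}$, $J\subseteq\{1,n\}$ (this translation is correct), and then analyze the Gelfand--Tsetlin poset. The genuine gap is in your case analysis producing stars. Your explicit center $x_{b-a+1,a}$ needs \emph{two distinct} indices $a<b$ in $J\cap[2,n-1]$, and your claim that every remaining bad $J$ is ``forced to involve the boundary indices $1$ or $n$'' is false: $J=\{c\}$ with $3\le c\le n-2$ (i.e.\ $\lambda$ a multiple of a fundamental weight, possible as soon as $n\ge 5$) lies in none of the three families and involves neither boundary index, yet is covered by neither of your two mechanisms. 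This is not a fringe case --- it is the \emph{first} case the paper treats: the free region is then a $c\times(n+1-c)$ rectangle and the star sits at an interior vertex of it (this is precisely Hibi--Li's basic example). Your second mechanism cannot rescue it: the natural candidate centered at the free entry of span $\{c\}$, with the two adjacent marks as outer points, fails in the regular poset, because the mark cluster with value $\lambda_c$ absorbs the frozen entries of span contained in $[1,c-1]$ and thereby becomes \emph{comparable} to the free upper neighbour of span $[c-1,c]$ (and dually below). The fix is to place the star wholly inside the free rectangle, e.g.\ centered at the entry of span $[c-1,c+1]$, all four of whose Hasse neighbours have spans containing $c$ and hence are free when $3\le c\le n-2$.

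There is a second, logical, gap in your converse direction: ``no vertex acquires two genuine lower and two genuine upper covers'' does not imply ``no star relation,'' since the outer points of a star need not cover its center. The poset $x_1,x_2\prec w\prec v\prec x_3,x_4$ (no further relations) passes your vertex-by-vertex test but contains the star $\{x_1,x_2,w,x_3,x_4\}$. What must be checked for the thin regions is the relative statement: no element has simultaneously an incomparable pair below it and an incomparable pair above it (equivalently, there is no pair $w\preceq v$ with $w$ having two lower covers and $v$ two upper covers), where marked elements are allowed as outer points. This does hold for your three shapes --- e.g.\ in the width-two strip the elements with an incomparable pair below all lie in the column adjacent to the large marks, those with an incomparable pair above all lie in the other column, and no element of the first column lies below one of the second --- but it is this check, not the one you state, that closes the argument; the paper performs it implicitly by inspecting Figure~\ref{fig-m12} and the linear poset.
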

\begin{proof}
Suppose $m_i \neq 0$, then we can read from the poset Figure~\ref{fig-sln}, that there is a subposet which has the form of a rectangle with $i$-rows and $n+1-i$-columns. This implies that for $3 \leq i \leq n-2$, there is a star subposet if $m_i \neq 0$. Now, suppose $n=3$ and $m_1, m_2, m_3 \neq 0$, then the poset has the form as in Figure~\ref{fig-sl4}, and so it has a star subposet.

\begin{figure}[ht]
\begin{picture}(200,110)
\put(20,20){$\bullet$}
\put(5,20){$0$}
\put(20,50){$\bullet$}
\put(5,50){$\lambda_3$}
\put(20,80){$\bullet$}
\put(5,80){$\lambda_2$}
\put(20,110){$\bullet$}
\put(5,110){$\lambda_1$}
\put(40,35){$\circ$}\put(40,65){$\circ$}\put(40,95){$\circ$}
\put(60,50){$\circ$}\put(60,80){$\circ$}
\put(80,65){$\circ$}
\put(25,110){$\line(4,-3){15}$}\put(25,80){$\line(4,-3){15}$}\put(25,50){$\line(4,-3){15}$}
\put(45,95){$\line(4,-3){15}$}\put(45,65){$\line(4,-3){15}$}
\put(65,80){$\line(4,-3){15}$}
\put(60,80 ){$\line(-4,-3){15}$}\put(60,50 ){$\line(-4,-3){15}$}
\put(40,95 ){$\line(-4,-3){15}$}\put(40,65 ){$\line(-4,-3){15}$}\put(40,35 ){$\line(-4,-3){15}$}
\put(80,65){$\line(-4,-3){15}$}
\end{picture}
\caption{The regular $\lie{sl}_{4}$-poset}\label{fig-sl4}
\end{figure}

From here, one can use a similar argument for the $n=4$ case and also for general $n$, that if at least three $m_i$ are non-zero, then there is a star subposet.\\
It remains to show that if one of the three cases in the statement of the proposition is satisfied, then there is no star subposet and hence by Theorem~\ref{thm-equiv}, there is no unimodular equivalence. Suppose $m_3 = \ldots = m_n = 0$, then the poset reduces to Figure~\ref{fig-m12}.
\begin{figure}[ht]
\begin{picture}(200,110)
\put(20,80){$\bullet$}
\put(5,80){$\lambda_2$}
\put(20,110){$\bullet$}
\put(5,110){$\lambda_1$}
\put(40,65){$\circ$}\put(40,95){$\circ$}
\put(60,50){$\circ$}\put(60,80){$\circ$}
\put(80,5){$\bullet$}
\put(80,35){$\circ$}\put(80,65){$\circ$}
\put(100,20){$\circ$}\put(100,50){$\circ$}
\put(120,35){$\circ$}
\put(25,110){$\line(4,-3){15}$}\put(25,80){$\line(4,-3){15}$}
\put(45,95){$\line(4,-3){15}$}\put(45,65){$\line(4,-3){15}$}
\put(65,80){$\line(4,-3){15}$}\put(65,50){$\line(4,-3){15}$}
\put(85,35){$\line(4,-3){15}$}
\put(85,65){$\line(4,-3){15}$}
\put(105,50){$\line(4,-3){15}$}
\put(60,80 ){$\line(-4,-3){15}$}
\put(40,95 ){$\line(-4,-3){15}$}
\put(80,65){$\line(-4,-3){15}$}
\put(100,50 ){$\line(-4,-3){15}$}\put(100,20 ){$\line(-4,-3){15}$}
\put(120,35){$\line(-4,-3){15}$}
\end{picture}
\caption{The $m_1, m_2 \neq 0$-poset}\label{fig-m12}
\end{figure}
This has no star subposet and hence, the corresponding marked chain and marked order polytopes are isomorphic. The same argument is valid for $m_1 = \ldots = m_{n-2} = 0$. For $m_2 = \ldots = m_{n-1} = 0$, the corresponding poset is linear and hence has no star subposet. This finishes the proof of the proposition.
\end{proof}

More general, any reduced decomposition of the longest element of $S_{n+1}$ induces a flat degeneration of $SL_{n+1}/P_\lambda$ (the marked oder /Gelfand-Tsetlin polytope is the particular choice $w_0 =s_1s_2s_1s_3s_2s_1 \ldots $) to toric varieties. In general, these toric varieties are are non-isomorphic in general (\cite{Lit98, AB04}). 
So the natural question is if there exists a reduced decomposition such that the induced toric variety is isomorphic to the toric variety obtained through the marked chain polytope. This will be part of future research.

\subsection{The \texorpdfstring{$\lie{sp}_n$}{symplectic Lie algebra}-case}
Let $(\mathcal{P}, A, \lambda)$ be the marked poset with Hasse diagram Figure~\ref{fig-spn}.\\
\begin{figure}[ht]
\begin{picture}(250,180)
\put(5,0){$0$}\put(5,40){$\lambda_n$}\put(5,130){$\lambda_2$}\put(5,160){$\lambda_1$}\put(5,100){$\lambda_3$}
\put(20,10){$\bullet$}\put(20,40){$\bullet$}\put(20,70){$\vdots$}\put(20,100){$\bullet$}\put(20,130){$\bullet$}\put(20,160){$\bullet$}
\put(60,0){$0$}\put(140,0){$0$}\put(180,0){$0$}\put(220,0){$0$}
\put(40,25){$\circ$}\put(40,55){$\circ$}\put(40,85){$\circ$}\put(40,115){$\circ$}\put(40,145){$\circ$}
\put(60,10){$\bullet$}\put(60,40){$\circ$}\put(60,70){$\circ$}\put(60,100){$\circ$}\put(60,130){$\circ$}
\put(80,25){$\circ$}\put(80,55){$\circ$}\put(80,85){$\circ$}\put(80,115){$\circ$}
\put(100,10){$\cdots$}\put(100,40){$\circ$}\put(100,70){$\circ$}\put(100,100){$\circ$}
\put(120,25){$\circ$}\put(120,55){$\circ$}\put(120,85){$\circ$}
\put(140,10){$\bullet$}\put(140,40){$\circ$}\put(140,70){$\circ$}
\put(160,25){$\circ$}\put(160,55){$\circ$}\put(180,10){$\bullet$}\put(180,40){$\circ$}
\put(200,25){$\circ$}\put(220,10){$\bullet$}

\put(25,160){$\line(4,-3){15}$}\put(25,130){$\line(4,-3){15}$}\put(25,100){$\line(4,-3){15}$}\put(25,40){$\line(4,-3){15}$}
\put(45,145){$\line(4,-3){15}$}\put(45,115){$\line(4,-3){15}$}\put(45,85){$\line(4,-3){15}$}\put(45,55){$\line(4,-3){15}$}\put(45,25){$\line(4,-3){15}$}
\put(65,130){$\line(4,-3){15}$}\put(65,100){$\line(4,-3){15}$}\put(65,70){$\line(4,-3){15}$}\put(65,40){$\line(4,-3){15}$}
\put(85,115){$\line(4,-3){15}$}\put(85,85){$\line(4,-3){15}$}\put(85,55){$\line(4,-3){15}$}
\put(105,100){$\line(4,-3){15}$}\put(105,70){$\line(4,-3){15}$}\put(105,40){$\line(4,-3){15}$}
\put(125,85){$\line(4,-3){15}$}\put(125,55){$\line(4,-3){15}$}\put(125,25){$\line(4,-3){15}$}
\put(145,70){$\line(4,-3){15}$}\put(145,40){$\line(4,-3){15}$}\put(165,55){$\line(4,-3){15}$}\put(165,25){$\line(4,-3){15}$}
\put(185,40){$\line(4,-3){15}$}\put(205,25){$\line(4,-3){15}$}
\put(40,145 ){$\line(-4,-3){15}$}\put(40,115 ){$\line(-4,-3){15}$}\put(40,55 ){$\line(-4,-3){15}$}\put(40,25 ){$\line(-4,-3){15}$}

\put(60,130 ){$\line(-4,-3){15}$}\put(60,100 ){$\line(-4,-3){15}$}\put(60,70){$\line(-4,-3){15}$}\put(60,40){$\line(-4,-3){15}$}
\put(80,115){$\line(-4,-3){15}$}\put(80,85 ){$\line(-4,-3){15}$}\put(80,55 ){$\line(-4,-3){15}$}\put(80,25 ){$\line(-4,-3){15}$}
\put(100,100 ){$\line(-4,-3){15}$}\put(100,70 ){$\line(-4,-3){15}$}\put(100,40 ){$\line(-4,-3){15}$}
\put(120,85){$\line(-4,-3){15}$}\put(120,55 ){$\line(-4,-3){15}$}
\put(140,70 ){$\line(-4,-3){15}$}\put(140,40 ){$\line(-4,-3){15}$}
\put(160,55 ){$\line(-4,-3){15}$}\put(160,25 ){$\line(-4,-3){15}$}
\put(180,40 ){$\line(-4,-3){15}$}
\put(200,25 ){$\line(-4,-3){15}$}
\end{picture}
\caption{The $\lie{sp}_n$-poset}\label{fig-spn}
\end{figure}
The associated marked order polytope is Berenstein-Zelevinsky-Littelmann polytope associated with the partition $\lambda$ (\cite{BZ01, Lit98}). 
The set of lattice points in this polytope parametrizes a basis of the simple $\lie{sp}_{n}$-module with highest weight $\lambda$ (the basis is obtained by restricting $V(\lambda)$ to a specific subalgebra isomorphic to $\lie{sp}_{n-1}$, and iterating this to $\lie{sl}_2$).\\

The marked chain polytope parametrizes a basis of the associated PBW graded module $\operatorname{gr}V(\lambda)$ \cite{FFoL11b}. 

We can read of immediately from the poset and Theorem~\ref{main-thm}, that the toric varieties obtained through the Berenstein-Zelevinsky-Littelmann polytope and the marked chain polytope (via the PBW filtration) are isomorphic if and only if $\lambda_3 = \lambda_4 = \ldots = \lambda_n = 0$ (for $n \geq 4$). 

\subsection{Demazure modules for \texorpdfstring{$\lie{sl}_n$}{the special linear Lie algebra}}
For any sequence $\ell_1 \geq \ldots \geq \ell_p \geq 0$ of integers, we may consider the poset embedded into a triangle, 
such that the lengths of the diagonals are (from left to right) $\ell_1, \ell_2, \ldots, \ell_p$. 
In the example Figure~\ref{fig-dem}, the sequence is $8,7,7,5,3,3,2,2$. We put a maximal marked element on top of the poset, labeled with $m$ and 
add marked elements to each minimal element in the poset, labeled with $0$.\\
\begin{figure}[ht]
\begin{picture}(350,190)
\put(20,0){$0$} \put(20,10){$\bullet$}\put(20,40){$\circ$}
\put(40,55){$\circ$}
\put(60,0){$0$}\put(60,10){$\bullet$}\put(60,40){$\circ$}\put(60,70){$\circ$}
\put(80,0){$0$} \put(80,10){$\bullet$}\put(80,25){$\circ$}\put(80,55){$\circ$}\put(80,85){$\circ$}
\put(100,40){$\circ$}\put(100,70){$\circ$}\put(100,100){$\circ$}
\put(120,55){$\circ$}\put(120,85){$\circ$}\put(120,115){$\circ$}
\put(140,0){$0$} \put(140,10){$\bullet$}\put(140,40){$\circ$}\put(140,70){$\circ$}\put(140,100){$\circ$}\put(140,130){$\circ$}
\put(160,55){$\circ$}\put(160,85){$\circ$}\put(160,115){$\circ$}\put(160,145){$\circ$}\put(160,175){$\bullet$} \put(168,175){$m$} 
\put(180,130){$\circ$}\put(180,100){$\circ$}\put(180,70){$\circ$}
\put(200,115){$\circ$}\put(200,85){$\circ$}\put(200,55){$\circ$}
\put(220,0){$0$} \put(220,100){$\circ$}\put(220,70){$\circ$}\put(220,40){$\circ$}\put(220,10){$\bullet$}
\put(240,85){$\circ$}\put(240,55){$\circ$}
\put(260,70){$\circ$}\put(260,40){$\circ$}
\put(280,0){$0$} \put(280,55){$\circ$}\put(280,25){$\circ$}\put(280,10){$\bullet$}
\put(300,0){$0$} \put(300,40){$\circ$}\put(300,10){$\bullet$}

\put(23,40){$\line(0,-1){25}$}
\put(63,40){$\line(0,-1){25}$}
\put(83,25){$\line(0,-1){10}$}
\put(143,40){$\line(0,-1){25}$}
\put(163,175){$\line(0,-1){25}$}
\put(223,40){$\line(0,-1){25}$}
\put(283,25){$\line(0,-1){10}$}
\put(303,40){$\line(0,-1){25}$}

\put(45,55){$\line(4,-3){15}$}
\put(65,70){$\line(4,-3){15}$}\put(65,40){$\line(4,-3){15}$}
\put(85,85){$\line(4,-3){15}$}\put(85,55){$\line(4,-3){15}$}
\put(105,100){$\line(4,-3){15}$}\put(105,70){$\line(4,-3){15}$}
\put(125,115){$\line(4,-3){15}$}\put(125,85){$\line(4,-3){15}$}\put(125,55){$\line(4,-3){15}$}
\put(145,70){$\line(4,-3){15}$}\put(145,100){$\line(4,-3){15}$}\put(145,130){$\line(4,-3){15}$}
\put(165,85){$\line(4,-3){15}$}\put(165,115){$\line(4,-3){15}$}\put(165,145){$\line(4,-3){15}$}
\put(185,70){$\line(4,-3){15}$}\put(185,100){$\line(4,-3){15}$}\put(185,130){$\line(4,-3){15}$}
\put(205,55){$\line(4,-3){15}$}\put(205,85){$\line(4,-3){15}$}\put(205,115){$\line(4,-3){15}$}
\put(225,70){$\line(4,-3){15}$}\put(225,100){$\line(4,-3){15}$}
\put(245,55){$\line(4,-3){15}$}\put(245,85){$\line(4,-3){15}$}
\put(265,40){$\line(4,-3){15}$}\put(265,70){$\line(4,-3){15}$}
\put(285,55){$\line(4,-3){15}$}

\put(40,55){$\line(-4,-3){15}$}
\put(60,70){$\line(-4,-3){15}$}
\put(80,85 ){$\line(-4,-3){15}$}\put(80,55 ){$\line(-4,-3){15}$}
\put(100,40 ){$\line(-4,-3){15}$}\put(100,70 ){$\line(-4,-3){15}$}\put(100,100 ){$\line(-4,-3){15}$}
\put(120,55 ){$\line(-4,-3){15}$}\put(120,85 ){$\line(-4,-3){15}$}\put(120,115 ){$\line(-4,-3){15}$}
\put(140,130){$\line(-4,-3){15}$}\put(140,100 ){$\line(-4,-3){15}$}\put(140,70 ){$\line(-4,-3){15}$}
\put(160,145 ){$\line(-4,-3){15}$}\put(160,115 ){$\line(-4,-3){15}$}\put(160,85 ){$\line(-4,-3){15}$}\put(160,55 ){$\line(-4,-3){15}$}
\put(180,130){$\line(-4,-3){15}$}\put(180,100 ){$\line(-4,-3){15}$}\put(180,70 ){$\line(-4,-3){15}$}
\put(200,115 ){$\line(-4,-3){15}$}\put(200,85 ){$\line(-4,-3){15}$}
\put(220,100 ){$\line(-4,-3){15}$}\put(220,70 ){$\line(-4,-3){15}$}
\put(240,85 ){$\line(-4,-3){15}$}\put(240,55 ){$\line(-4,-3){15}$}
\put(260,70 ){$\line(-4,-3){15}$}
\put(280,55 ){$\line(-4,-3){15}$}
\put(300,40 ){$\line(-4,-3){15}$}
\end{picture}
\caption{The minuscule Demazure poset}\label{fig-dem}
\end{figure}
It has been shown in \cite{BF14}, that the lattice points in the marked chain polytope parametrize a basis of the associated 
PBW graded-module of a $\lie{sl}_n$-Demazure module $V_w(\lambda) \subset V(\lambda)$, where $\lambda = m \omega_i$ is rectangular and $w \in S_{n+1}$ is uniquely determined by the sequence $\ell_1 \geq \ldots \geq \ell_p \geq 0$.  
For any Demazure module $V_w(m \omega_i)$ of rectangular highest weight, there exists such a marked poset. The marked order polytope is a maximal Kogan 
face of type $w$ in the Gelfand-Tsetlin polytope of highest weight $m \omega_i$.\\

In \cite{BF14}, the number of facets in both polytopes has been computed as well as the theorem on unimodular equivalence has been proved. 
This has been done using the results by Stanley (\cite{Sta86}) and Hibi-Li (\cite{HL12}), since in this case the marked poset polytopes are in fact dilations of $0-1$ polytopes\\

In \cite{Fou14b}, a marked poset was given for Demazure modules of arbitrary highest weight in the $\lie{sl}_n$-case for triangular Weyl group elements. 
Here, the condition triangular ensures that the naturally obtained relations (from representation theory) can be translated into chains within the poset. 
An interpretation of the marked order polytope is missing but a close connection to Kogan faces (\cite{Kog00, KST12}) is expected.
\subsection{Further cominuscule cases}
Let $\lie g$ be of any finite type (\cite{Car05}) and $\omega_i$ a cominuscule fundamental weight. Then in \cite{BD14} a (marked) poset is provided such that the lattice points in the chain polytope parametrize a basis of the associated PBW graded module $\operatorname{gr} V(m\omega_i)$. 
Again, an interpretation of the order polytope is missing. One may expect a relation to string polytopes defined in \cite{Lit98} for exceptional cases.

\subsection{Indecomposables correspond to fundamental weights}
There is a common idea in all the paper about PBW graded modules on how to show that the lattice points in the marked chain polytope parametrize a basis. 
First, one has to show that there is a total homogeneous order on the monomials such that the lattice points give a spanning set of the associated graded module $\operatorname{gr}^t V(\lambda)$ (the graded components are $0$ or $1$-dimensional).\\
In order to prove that they give linear independent monomials, one uses that 
\[V(\lambda + \mu) = U(\lie u).v_\lambda \otimes v_\mu \subset V(\lambda) \otimes V(\mu)\] 
and that if $M_\lambda$ is a set of monomials which is linear independent in $\operatorname{gr}^t V(\lambda)$ ($M_\mu$ similarly for$\operatorname{gr}^t V(\mu)$), then the set of products $M_\lambda \dot M_\mu$ is linear independent in $V(\lambda + \mu)$ \cite{FFoL13}. \\
So it suffices to show that the lattice points marked chain polytope for $\lambda + \mu$ is the Minkowski sum of the lattice points for $\lambda$ and $\mu$. Then it would suffices to check the linear independence for the ''smaller'' weights $\lambda$ and $\mu$. \\
This has been done in the aforementioned paper case by case, while now Theorem~\ref{main-thm2} gives a unified proof for all marked chain polytopes. Namely, one has to check the linear independence for $\mathcal{C}$-indecomposable marked polytopes only. Then the rest follows from the Minkowski sum property of the marked chain polytopes.\
In the cases considered above, the $\mathcal{C}$-indecomposables correspond to fundamental weights.


\section{Facets of the marked poset polytopes}\label{Section-Facets}
Let $(\mathcal{P}, A, \lambda)$ be a marked poset. We would like to compute the number of facets of the marked order and the marked chain polytope. \\
To simplify the computation we delete all obvious redundant cover relations from the marked poset without changing the number of facets:\\ 
Let $a\neq b \in A$, $a \prec x_1 \prec \ldots  \prec x_s \prec b$ and $\lambda_a = \lambda_b$, then in the marked chain polytope $x_1 = \ldots = x_s = 0$ and in the marked order polytope $x_1 = \ldots = x_s = \lambda_b$. \\
We construct a new marked poset by introducing a new vertex $c \in A$ with cover relations: for all $x \in \mathcal{P}$ such that there exists $y \in \{ a,x_1, \ldots, x_s, b \}$ with $y$ covers $x$ (resp. $x$ covers $y$), we set  $x \prec c$ (resp. $c \prec x$). We mark this new vertex by $\lambda_b$.\\ 
The new marked poset is now $(\mathcal{P}\setminus\{a , x_1, \ldots, x_s, b\} \cup \{c\}, A \setminus\{a,b\} \cup \{c\}, \overline{\lambda})$,
 where $\overline{\lambda}$ is the marking vector obtained from $\lambda$ by adding $\lambda_c$ and deleting $\lambda_b, \lambda_a$. We iterate this until for all $a \prec b \in A:\lambda_a < \lambda_b$. \\
This is obviously a poset, since the new defined partial order is again transitive (we have divided our poset into several ''free'' vertices and several connected clusters whose entries in the marked poset polytopes are fixed and identified these cluster with a new vertex).\\
Suppose now $x$ covers $a$ and further, there exists $b \prec x$ with $\lambda_b \geq  \lambda_a$, where $a,b \in A, x \in \mathcal{P}\setminus A$. 
Then we delete the cover relation $a \prec x$ from our poset. On the other hand, suppose $a$ covers $x$ and there exists $x \prec b$ with $\lambda_b \leq \lambda_a$, where again $a,b \in A, x \in \mathcal{P}\setminus A$. Then we delete the cover relation $x \prec a$ from the poset. To visualize this: the following two configurations are changed such that the relation between $x$ and $a$ is deleted from the poset:
\begin{picture}(300,70)
\put(100,4){$\lambda_b$}
\put(114,48){$\line(-1,-1){10}$}
\put(115,50){$x$}
\put(120,48){$\line(1,-1){10}$}
\put(102,15){$\vdots$}
\put(100,32){$y$}
\put(130,32){$\lambda_a$}
\put(125,8){$\lambda_b \geq \lambda_a$}

\put(246,23){$\lambda_a$}
\put(255,20){$\line(1,-1){10}$}
\put(265,2){$x$}
\put(283,32){$\vdots$}
\put(281,50){$\lambda_b$}
\put(282,20){$\line(-1,-1){10}$}
\put(281,22){$y$}
\put(230,45){$\lambda_a \geq \lambda_b$}
\end{picture}\\
We iterated this process (which is finite) until these cases do not appear in our poset.\\
There is one last step before calling the marked poset regular: We may assume that $\lambda_a \neq \lambda_b$ for all $a \neq b \in A$. 
Suppose $\lambda_a = \lambda_b$, then we simply introduce a new vertex $c \in A$, with $\lambda_c = \lambda_a$ and the relations induced from $a$ and $b$. And we delete all cover relations which contain elements from $A$ only, e.g. cover of the form $a \prec b$ with $a,b \in A$.\\
We call the resulting marked poset the \textit{regular}  marked poset associated to $(\mathcal{P}, A, \lambda)$.
The following is obvious:
\begin{prop}
The number of facets in the marked order polytope as well as in the marked chain polytope associated to the regular marked poset is the same as the number of facets in $\mathcal{O}(\mathcal{P}, A)_\lambda$ (resp. $\mathcal{C}(\mathcal{P}, A)_\lambda$).
\end{prop}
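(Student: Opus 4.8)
The plan is to follow the facet numbers of both polytopes through the three elementary moves that transform $(\mathcal{P},A,\lambda)$ into its regularization, and to show that each move either leaves $\mathcal{O}(\mathcal{P},A)_\lambda$ and $\mathcal{C}(\mathcal{P},A)_\lambda$ literally unchanged as subsets of $\mathbb{R}^{\mathcal{P}\setminus A}$, or replaces them by affinely isomorphic polytopes. Since an affine isomorphism preserves the whole face lattice, in particular the number of facets, and since each move strictly decreases either the number of vertices or the number of cover relations (so the procedure terminates), composing these observations gives the claim.

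First I would treat the collapsing move, where a chain $a\prec x_1\prec\ldots\prec x_s\prec b$ with $\lambda_a=\lambda_b$ is contracted to a single marked vertex $c$. The chain inequality forces $\bs_{x_1}+\ldots+\bs_{x_s}\leq\lambda_b-\lambda_a=0$, and nonnegativity pins each $\bs_{x_i}$ to the constant $0$ for $\mathcal{C}$ (resp.\ $\lambda_b$ for $\mathcal{O}$). Hence the polytope lies in the coordinate flat where these coordinates are fixed, and the projection forgetting $x_1,\ldots,x_s$ restricts to an affine bijection of that flat. I would then verify that this bijection carries the old polytope onto the polytope of the collapsed poset, by substituting the fixed values into the defining inequalities and matching the result against the inequalities attached to $c$; being an affine isomorphism, it preserves the facet count.

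Next I would handle the deletion of an obviously redundant cover. For $\mathcal{O}$ this is immediate: deleting $a\prec x$ in the presence of $b\prec x$ with $\lambda_b\geq\lambda_a$ only removes the inequality $\lambda_a\leq\bs_x$, which is implied by $\lambda_b\leq\bs_x$ (valid since $b\prec x$ forces $\lambda_b\leq\bs_x$ on the polytope), so the set is unchanged. The chain polytope is the delicate point, and I expect this to be the main obstacle: removing the cover $a\prec x$ destroys an entire family of chain inequalities, one for each chain $a\prec x\prec x_2\prec\ldots\prec b'$ beginning with the deleted relation. For each such chain I would exhibit the surviving dominating chain $b\prec x\prec x_2\prec\ldots\prec b'$, whose inequality $\bs_x+\bs_{x_2}+\ldots\leq\lambda_{b'}-\lambda_b$ is at least as strong because $\lambda_b\geq\lambda_a$. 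The care is needed in checking that this chain, with endpoints in $A$ and interior in $\mathcal{P}\setminus A$, is genuinely among the defining inequalities even when the passage from $b$ to $x$ runs through intermediate vertices (those in $A$ only make the dominating inequality stronger), so that the polytope is unchanged as a set and only non-facet inequalities are removed. The dual configuration (deleting $x\prec a$ under $x\prec b$ with $\lambda_b\leq\lambda_a$) is symmetric.

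Finally, the move separating equal marks merges two elements $a,b\in A$ with $\lambda_a=\lambda_b$ into a single vertex $c$ and discards cover relations internal to $A$. Since this touches only $A$, the ambient space $\mathbb{R}^{\mathcal{P}\setminus A}$ is unchanged and the two bounds $\lambda_a\leq\bs_x$ and $\lambda_b\leq\bs_x$ coincide, so both polytopes, and \emph{a fortiori} their facet numbers, are literally the same; covers internal to $A$ impose no condition on $\bs\in\mathbb{R}^{\mathcal{P}\setminus A}$ and can be dropped freely. Combining the three cases yields that the regular marked poset produces order and chain polytopes with the same numbers of facets as $\mathcal{O}(\mathcal{P},A)_\lambda$ and $\mathcal{C}(\mathcal{P},A)_\lambda$.
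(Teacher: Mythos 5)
Your proposal is correct and follows exactly the route the paper intends: the paper states this proposition without proof (``The following is obvious'') immediately after describing the three regularization moves, and your argument is precisely the detailed verification that each move leaves both polytopes unchanged as sets or replaces them by affinely isomorphic ones (the collapsing move pinning coordinates, the redundant cover being dominated by a surviving chain, the equal-marks merge only relabeling inequalities). Your treatment of the chain polytope under cover deletion, including the observation that intermediate vertices only strengthen the dominating inequality, is the one nontrivial point, and you handle it correctly.
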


So to compute the number of facets in the marked poset polytopes we can assume that $(\mathcal{P}, A, \lambda)$ is already regular. 
Then denote $h(\mathcal{P})$ the number of cover relations in $\mathcal{P}$, and for $a,b \in A$, denote $c_\mathcal{P}(a,b)$ the number of saturated chains $a \prec x_1 \prec  \ldots \prec x_s \prec b$ (here $x_i \in \mathcal{P}\setminus A$ and saturated means that in each step there is no $y \in \mathcal{P}\setminus A$ such that $x_i \prec y \prec x_{i+1}$). 
\begin{lem}\label{lem-num-facets} For any regular marked poset $(\mathcal{P}, A, \lambda)$ we have
\[
\sharp \{\text{ facets of } \mathcal{O}(\mathcal{P}, A)_\lambda\} = h(\mathcal{P}) \; \; , \; \; \sharp \{\text{ facets of } \mathcal{C}(\mathcal{P}, A)_\lambda \}= |\mathcal{P}\setminus A| + \sum_{a \prec b \in A} c_\mathcal{P}(a,b).
\]
\end{lem}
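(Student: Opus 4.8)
The plan is to use the standard facet criterion for a full-dimensional polytope: once we know that $\dim \mathcal{O}(\mathcal{P}, A)_\lambda = \dim \mathcal{C}(\mathcal{P}, A)_\lambda = |\mathcal{P}\setminus A|$, a valid inequality $\ell(\bs) \leq c$ is facet-defining precisely when there is a point $\bs^0$ in the polytope with $\ell(\bs^0) = c$ at which every \emph{other} defining inequality holds strictly; such a point lies in the relative interior of the associated face and forces it to have codimension one. The proof then splits, for each polytope, into discarding the redundant inequalities from the given presentation and exhibiting, for each surviving inequality, a witness point of this kind. Full-dimensionality itself follows from regularity: since $\lambda_a < \lambda_b$ whenever $a \prec b$ in $A$, one can assign to the elements of $\mathcal{P}\setminus A$ strictly increasing real values lying strictly between the bounds imposed by the markings, producing an interior point of $\mathcal{O}(\mathcal{P}, A)_\lambda$; for $\mathcal{C}(\mathcal{P}, A)_\lambda$ the origin together with a small positive perturbation of every coordinate is interior, because $\lambda_b - \lambda_a > 0$ for every chain-bounding pair.

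For the marked order polytope, redundancy is handled by transitivity: if $u \prec w$ is a comparable pair that is not a cover, pick $v$ with $u \prec v \prec w$; then $\bs_u \leq \bs_v$ and $\bs_v \leq \bs_w$ (reading $\lambda$ for elements of $A$) give $\bs_u \leq \bs_w$, so only cover relations can define facets. The ambient constraint $\bs_x \geq 0$ is in turn implied by $\lambda_a \leq \bs_x$ for a minimal $a \prec x$ in $A$ together with $\lambda_a \geq 0$, hence contributes nothing new. It remains to show each cover relation gives a genuine facet: for a cover $x \prec y$ (respectively $a \prec x$, respectively $x \prec b$) I would produce a point with $\bs_x = \bs_y$ (respectively $\bs_x = \lambda_a$, respectively $\bs_x = \lambda_b$) at which all other inequalities are strict, which is possible exactly because no element of $\mathcal{P}$ lies strictly between the two covering elements and regularity leaves slack elsewhere. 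This yields $h(\mathcal{P})$ facets.

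For the marked chain polytope, each nonnegativity constraint $\bs_x \geq 0$ is facet-defining: setting $\bs_x = 0$ and all remaining coordinates to small positive values makes every chain inequality strict (as $\lambda_b - \lambda_a > 0$) and every other nonnegativity strict, accounting for $|\mathcal{P}\setminus A|$ facets. Redundancy among the chain inequalities is handled by refinement: a non-saturated chain $a \prec x_1 \prec \ldots \prec x_n \prec b$ refines to a saturated chain with the same endpoints, and since the inserted coordinates are nonnegative the saturated inequality dominates the original; thus only saturated chains can survive, giving $\sum_{a \prec b \in A} c_\mathcal{P}(a,b)$ candidates. For each saturated chain I would again build a witness point at which only that chain inequality is tight, using saturation to keep every competing chain sum strictly below its bound.

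The main obstacle is precisely the construction of these witness points, i.e. proving that the surviving inequalities are genuinely irredundant. For the order polytope one must collapse a single cover to equality while respecting every marking bound and every remaining cover strictly; the delicate case is a collapsed pair sitting on a long chain with tight markings at both ends, where the strict inequality $\lambda_a < \lambda_b$ supplied by regularity is exactly what provides the necessary room. For the chain polytope the analogous difficulty is to load the entire budget $\lambda_b - \lambda_a$ onto one saturated chain while keeping every other saturated chain, possibly sharing vertices with it, strictly under budget; the saturation hypothesis is what guarantees that such a distribution of values exists, and verifying this for chains that branch and recombine is the technical heart of the argument.
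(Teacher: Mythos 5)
Your overall strategy (full-dimensionality plus a witness point in the relative interior of each alleged facet) is the standard one, and your disposal of the visibly redundant inequalities is fine: transitivity for non-cover relations, refinement for non-saturated chains, and absorption of $\bs_x \geq 0$ into the marking bound of a minimal $a \prec x$. For calibration, note that the paper itself offers no proof of this lemma at all; it is asserted after the regularization discussion. The problem with your proposal is that it defers exactly the step that carries all of the content: the witness points are never constructed, only promised (``I would produce a point\ldots''), and you yourself flag their construction as ``the technical heart''. As it stands this is a proof plan, not a proof.

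Moreover, the deferred step is not a routine verification: under the paper's literal definition of regularity it is false, so no witness-point construction can succeed. Take $\mathcal{P} = \{a,b,c,d,x,y\}$, $A = \{a,b,c,d\}$, with cover relations $c \prec x \prec y \prec d$, $x \prec b$, $a \prec y$, and markings $\lambda_c = 0$, $\lambda_b = 1$, $\lambda_a = 2$, $\lambda_d = 3$. All markings are distinct, none of the paper's deletion rules for covers involving elements of $A$ applies, and there are no covers inside $A$, so this marked poset is regular; yet the cover $x \prec y$ between two unmarked elements gives a redundant inequality, since every point of the polytope already satisfies $\bs_x \leq \lambda_b = 1 < 2 = \lambda_a \leq \bs_y$. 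Thus $\mathcal{O}(\mathcal{P},A)_\lambda = [0,1] \times [2,3]$ has $4$ facets while $h(\mathcal{P}) = 5$. Likewise the saturated chain $c \prec x \prec y \prec d$ gives $\bs_x + \bs_y \leq 3$, which is implied by the two saturated-chain inequalities $\bs_x \leq 1$ and $\bs_y \leq 1$, so $\mathcal{C}(\mathcal{P},A)_\lambda = [0,1]^2$ has $4$ facets, whereas the claimed count is $|\mathcal{P}\setminus A| + \sum c_\mathcal{P}(\cdot,\cdot) = 2 + 3 = 5$. So your justification that a witness exists ``because no element of $\mathcal{P}$ lies strictly between the two covering elements'' is wrong: coverness (respectively saturation) does not prevent redundancy caused by markings sitting on incomparable side chains. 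Closing the gap would require strengthening the regularity hypothesis to exclude such configurations; with the hypotheses as stated, the irredundancy you are trying to certify by witness points simply does not hold.
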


\begin{thm}
For any marked poset $(\mathcal{P}, A, \lambda)$, the number of facets in $\mathcal{O}(\mathcal{P}, A)_\lambda$ is less or equal to the number of facets in $\mathcal{O}(\mathcal{P}, A)_\lambda$.
\end{thm}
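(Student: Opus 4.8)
The plan is to reduce the statement to the two facet counts recorded in Lemma~\ref{lem-num-facets}. By the preceding Proposition we may assume $(\mathcal{P},A,\lambda)$ is regular, so that $\sharp\{\text{facets of }\mathcal{O}(\mathcal{P},A)_\lambda\}=h(\mathcal{P})$ and $\sharp\{\text{facets of }\mathcal{C}(\mathcal{P},A)_\lambda\}=|\mathcal{P}\setminus A|+\sum_{a\prec b}c_\mathcal{P}(a,b)$. Writing $U:=\mathcal{P}\setminus A$, the asserted inequality (between the order and chain facet numbers) becomes the purely combinatorial statement
\[
h(\mathcal{P})\ \leq\ |U|+\sum_{a\prec b\in A}c_\mathcal{P}(a,b),
\]
that is, the number of cover relations (edges of the Hasse diagram) is bounded by $|U|$ plus the number of saturated chains running from a marked element up to a marked element through $U$. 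Since $A$ contains all minimal and all maximal elements, every $x\in U$ has at least one lower cover and at least one upper cover; this is used throughout.

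I would fix a linear extension $\ell$ of $\prec$. For each $x\in U$, call its $\ell$-largest lower cover the \emph{parent edge} of $x$; this selects exactly $|U|$ distinct cover relations, one per unmarked element. Call an edge \emph{remaining} if it is not a parent edge, so there are precisely $h(\mathcal{P})-|U|$ remaining edges, and it suffices to inject them into the set of saturated $A$-$A$ chains through $U$. Given a remaining edge $e=(v\prec w)$, I attach to it the chain obtained by descending from $v$ along parent edges (repeatedly passing to the $\ell$-largest lower cover) until a marked element is reached, then crossing $e$, then ascending from $w$ along $\ell$-smallest upper covers until a marked element is reached. Because descent and ascent terminate exactly upon meeting $A$ and all intermediate vertices remain in $U$, the result is a genuine saturated chain $a\prec z_1\prec\cdots\prec z_s\prec b$ counted by some $c_\mathcal{P}(a,b)$; denote it $\Phi(e)$.

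The crux is the injectivity of $\Phi$, which I would establish via an explicit left inverse. Reading $\Phi(e)$ upward from its marked bottom, every edge strictly below $e$ is by construction the parent edge of its upper vertex, whereas $e$ itself either has its top in $A$ or, having top in $U$, fails to be that vertex's parent edge precisely because it is remaining. Hence $e$ is recovered from the chain as \emph{the lowest edge whose upper vertex either lies in $A$ or is not reached through its parent edge}. This rule depends only on the chain $\Phi(e)$, so $\Phi(e)=\Phi(e')$ forces $e=e'$, giving injectivity; together with Lemma~\ref{lem-num-facets} this yields the displayed inequality and hence the theorem.

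I expect the main obstacle to lie in the injectivity step: one must check that the \emph{ascending} portion of $\Phi(e)$, whose internal edges are not governed by the parent-edge selection, cannot trigger the recovery rule below the position of $e$, and that the descent and ascent genuinely terminate inside $A$ with all intermediate vertices in $U$ — this last point is exactly where the hypothesis that $A$ contains all extremal elements is essential. The reduction to the regular case and the passage to the edge/chain count are immediate from the Proposition and Lemma~\ref{lem-num-facets}, so all the real content is concentrated in constructing $\Phi$ and verifying that the recovery rule inverts it.
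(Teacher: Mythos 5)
Your proposal is correct, but it takes a genuinely different route from the paper. Both arguments begin the same way: use the regularization Proposition and Lemma~\ref{lem-num-facets} to reduce the theorem to the combinatorial inequality $h(\mathcal{P})\leq|\mathcal{P}\setminus A|+\sum_{a\prec b\in A}c_\mathcal{P}(a,b)$. From there the paper proceeds by induction on $|\mathcal{P}|$: it deletes a minimal element $x$ of $\mathcal{P}\setminus A$ (chosen so that the marked element it covers has maximal marking, which is what keeps the smaller poset regular), computes $h(\mathcal{P})-h(\mathcal{P}\setminus\{x\})=1+|D|$ together with the corresponding drop in the chain counts, and concludes from the key estimate $|D|\leq\sum_{a\prec c\in A}\sum_{a\prec y_i\prec c}D_i$ of \eqref{d-eq}. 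You instead give a direct, non-inductive injection: the parent-edge selection along a linear extension accounts for exactly $|\mathcal{P}\setminus A|$ cover relations, and your map $\Phi$ with its bottom-up recovery rule injects the remaining cover relations into saturated $A$-to-$A$ chains. Your injectivity argument is sound, and the ``main obstacle'' you flag at the end is in fact vacuous: every edge of $\Phi(e)$ strictly below $e$ is a descent edge, hence the parent edge of its unmarked top, so the recovery rule cannot fire before reaching $e$ regardless of how the ascending portion behaves; note also that regularity guarantees there are no cover relations inside $A$, so every remaining edge genuinely produces a chain through at least one unmarked element. What your route buys is robustness and brevity: no induction, and no need to verify that the deleted poset stays regular (a somewhat delicate point in the paper, including the case analysis for elements that are both minimal and maximal in $\mathcal{P}\setminus A$). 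What the paper's route buys is reusability: the bookkeeping with $D$, the $D_i$, and the analysis of when \eqref{d-eq} is an equality is precisely what Corollary~\ref{cor-facet} exploits to characterize equality of facet numbers by the absence of star relations; with your injection, that characterization would require a separate study of when $\Phi$ is surjective, which is not immediate.
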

\begin{proof}
We generalize here the idea of the proof in \cite{HL12} for chain and order polytopes (e.g. $\lambda_a \in \{0,1\}$ for all $a \in A$). 
So we will use induction on the number of elements in $\mathcal{P}$. Let $x \in \mathcal{P}\setminus A$ be a minimal element in the subposet $\mathcal{P}\setminus A$ and suppose $x$ is not maximal (in this poset). If $x$ would be minimal and maximal, then in both polytopes, there are exactly two facets induced from the covering relations $a \prec x \prec b$, so we can omit this case here. Since the marked poset is regular, there is a unique $a \in A$ such that $x$ covers $a$, moreover if $b \prec x$ then $b = a$ since $x$ is minimal. \\
We define the marked poset $(\mathcal{P}\setminus\{x\}, A, \lambda)$ to be the full subposet without the vertex $x$. We can assume that we have chosen $x$ such that $\lambda_a$, for the covered $a$, is maximal. 
Then the marked poset  $(\mathcal{P}\setminus\{x\}, A, \lambda)$ is again regular (the first part of the reduction does not apply in this case, while the second part of the reduction is not necessary due to the fact that $\lambda_a$ is maximal). By induction we know:
\[
h(\mathcal{P} \setminus\{x\}) \leq  |\mathcal{P}\setminus A| -1 + \sum_{a \prec b \, \in \,  A} c_{\mathcal{P}\setminus\{x\}}(a,b).
\]
We will compute the difference $h(\mathcal{P}) - h(\mathcal{P} \setminus\{x\})$.\\
We denote $D = \{y_1, \ldots, y_s\}$ the set of elements in $\mathcal{P}\setminus A$ such that $y_i$ covers $x$ and there exists a saturated chain 
$b \prec z_1 \prec \ldots \prec z_t \prec y_i$ with $z_{i_j} \in \mathcal{P}\setminus A$, $\lambda_b \geq \lambda_a$ and $z_t \neq x$ (not that $t$ might be $0$ and then $z_{i_t} = b$, also $a = b$ is possible). For each $y_i \in D$, 
we denote $D_i$ the number of saturated chains in $\mathcal{P}\setminus A$ starting in $y_i$ ($y_i \prec z_1 \prec \ldots  \prec z_t$). Certainly $D_i \geq 1$ as $y$ is in any such chain. Then
\[
h(\mathcal{P} \setminus\{x\}) = h(\mathcal{P}) - 1 - |D|.
\]
Further for $a \prec c$:
\[
c_{\mathcal{P}\setminus\{x\}}(a,c) = c_{\mathcal{P}}(a,c) - \sum_{a \prec y_i  \prec c} D_i
\]
and for all $b,c \in A$ with $b \neq a$:
\[
c_{\mathcal{P}\setminus\{x\}}(b,c) = c_{\mathcal{P}}(b,c).
\]
Since any $y_i \in D$ is part of at least one maximal chain $a \prec x \prec y_i \prec \ldots \prec c$ for some $c$, we have
\begin{eqnarray}\label{d-eq}
|D| \leq \sum_{a \prec c \in A}  \sum_{a \prec y_i \prec c} D_i.
\end{eqnarray}
Concluding we have 
\[
h(\mathcal{P}) =  h(\mathcal{P} \setminus\{x\}) + 1 + |D| \leq \left(|\mathcal{P}\setminus A| -1 + \sum_{b \prec c \in A} c_{\mathcal{P}\setminus\{x\}}(b,c)\right) + 1 + |D| \leq  |\mathcal{P}\setminus A|  + \sum_{b \prec c \in A} c_{\mathcal{P}}(b,c) . 
\]
\end{proof}

We can deduce easily that the number of facets in both polytopes is equal if and only if the inequality in \eqref{d-eq} is in fact an equality in each step. 
\begin{coro}\label{cor-facet}
Let $(\mathcal{P}, A, \lambda)$ be marked polytope. Then the number of facets in $\mathcal{O}(\mathcal{P}, A)_\lambda$ is equal to the number of facets in $\mathcal{C}(\mathcal{P}, A)_\lambda$ if and only if the regular marked poset has no star relation.
\end{coro}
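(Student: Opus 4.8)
The plan is to read the corollary off the inductive computation used to prove the preceding theorem. Writing $\Delta(\mathcal{P})=\sharp\{\text{facets of }\mathcal{C}(\mathcal{P},A)_\lambda\}-\sharp\{\text{facets of }\mathcal{O}(\mathcal{P},A)_\lambda\}$, one deletion step (removing a minimal $x\in\mathcal{P}\setminus A$ with $\lambda_a$ maximal for its unique lower neighbour $a\in A$) contributes, by Lemma~\ref{lem-num-facets}, the identity $h(\mathcal{P})-h(\mathcal{P}\setminus\{x\})=1+|D|$, and the chain-count identity from the proof,
\[
\Delta(\mathcal{P})-\Delta(\mathcal{P}\setminus\{x\})=\Big(1+\sum_{y_i\in D}D_i\Big)-\big(1+|D|\big)=\sum_{y_i\in D}(D_i-1)\ge 0.
\]
Hence $\Delta(\mathcal{P})$ is the sum over all steps of these nonnegative terms, and, as remarked, $\Delta(\mathcal{P})=0$ exactly when every $y_i\in D$ has $D_i=1$ at every step, i.e. when \eqref{d-eq} is tight throughout. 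So everything reduces to matching ``some step is strict'' with ``the regular poset carries a star''.

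First I would record the convenient reformulation that $\mathcal{P}$ contains a star (an induced subposet isomorphic to the five-element \emph{star} poset) if and only if there is an element $c\in\mathcal{P}\setminus A$ admitting two incomparable elements below it and two incomparable elements above it: the chosen witnesses induce exactly the star, and the centre of a star is such a $c$. Next comes the local translation, that a step is strict precisely when the current poset has such a centre. Indeed, if some $y_i\in D$ has $D_i\ge 2$ then, because $\lambda_a$ is maximal and the markings are distinct, the competing chain forcing $y_i\in D$ must start at $a$ itself, so $y_i$ covers two incomparable elements $x$ and $z_t$; and $D_i\ge 2$ produces a first branching $w\succeq y_i$ with two incomparable covers $q_1,q_2$. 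Thus $\{x,z_t,y_i,q_1,q_2\}$ induces a star and $y_i$ is a centre; the converse inclusion is immediate from the same five elements.

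Since every poset occurring in the induction is a full (hence induced) subposet of $\mathcal{P}$, any star appearing at some step is already a star of $\mathcal{P}$. This settles one implication at once: if $\mathcal{P}$ has no star then no step is strict, so $\Delta(\mathcal{P})=0$ and the two facet numbers agree. For the reverse implication I would argue that a star forces a strict step by tracking the invariant ``$\exists$ centre'', which holds for $\mathcal{P}$ and fails for the empty poset; some deletion step must therefore destroy it, and I claim that step is strict. Choosing a minimal centre $c$ in the current poset, together with a minimal common upper bound of an incomparable pair $x\parallel p_2$ lying below $c$, one sees that $c$ itself must cover two incomparable elements; pushing down along the $x$-side to the cover of $x$ weakly below $c$ then exhibits a $y_i\in D$ with $D_i\ge 2$ at this step.

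The hard part is exactly this last point. Since the star is only an induced configuration, its legs need not be covers, so one must promote them to an honest cover $y_i$ of the deleted $x$ that simultaneously acquires a second lower cover (membership in $D$) and a genuine upward branching ($D_i\ge 2$), all while the induction is not free to choose which minimal element to delete. I expect to reconcile this using the maximality of $\lambda_a$ together with regularity — which is precisely what guarantees that the competing lower chain starts at $a$ and that the two lower legs are genuine covers — and using that $\Delta(\mathcal{P})$ is a polytope invariant independent of the deletion order, so producing a single strict step anywhere suffices. Carrying out the minimal-centre descent carefully, and checking that no intermediate marked element interrupts the two upward chains witnessing $D_i\ge 2$, completes the argument.
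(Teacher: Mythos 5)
Your reduction of the corollary to tightness of \eqref{d-eq} in every deletion step is exactly the paper's framework, and your first direction is essentially the paper's: a strict step yields some $y_i\in D$ with two incomparable lower covers (the deleted $x$ and the endpoint $z_t$ of the competing chain) together with an upward branching coming from $D_i\ge 2$, hence a star; since every poset occurring in the induction is an induced subposet of $\mathcal{P}$, this gives ``no star $\Rightarrow$ equal facet numbers.'' One local repair is needed there: your claim that maximality of $\lambda_a$ forces the competing chain to start at $a$ itself is false (it may start at any $b$ with $\lambda_b\ge\lambda_a$), but the incomparability $x\parallel z_t$ that you actually need follows anyway from minimality of $x$ in $\mathcal{P}\setminus A$ and the cover property, so this direction survives.

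The genuine gap is the converse: ``star relation $\Rightarrow$ some step is strict.'' You do not prove it; you sketch a plan (track the invariant ``a centre exists,'' locate the step at which the last star dies, and argue by a minimal-centre descent that this step is strict) and then explicitly defer the key verification (``The hard part is exactly this last point \dots I expect to reconcile this \dots''). The difficulty you name is real and is left unresolved: the induction is not free to delete an element of your choosing --- it must delete a minimal element of $\mathcal{P}\setminus A$ whose covered marking is maximal, to preserve regularity --- so the step destroying the last star may, a priori, remove an element of the star in a position that creates no $y_i\in D$ with $D_i\ge 2$ (for instance the centre itself, when both lower legs of the star are marked). The paper closes this direction by a different and direct construction: it first replaces the star by one in which the centre $x$ covers both lower legs $x_1,x_2$; if neither $x_1$ nor $x_2$ is minimal in $\mathcal{P}\setminus A$, the element deleted by the induction lies outside the star and the star survives into the smaller poset, so one may assume $x_1$ is minimal in $\mathcal{P}\setminus A$ and that, say, $x_3\in\mathcal{P}\setminus A$; performing the deletion step at $x_1$, the chain through $x_2$ puts the centre $x$ into $D$, while the incomparable upper legs $x_3,x_4$ force $D_x\ge 2$, so \eqref{d-eq} is strict and the facet numbers differ. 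Without an argument of this kind --- one that actually exhibits a strict step compatible with the forced deletion order --- your proof of the ``only if'' direction is incomplete.
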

\begin{proof}
Suppose we do not have a star relation. Let $x$ be chosen as in the proof of the theorem and suppose $y$ covers $x$. If $y \notin D$, then deleting $x$ has no influence on the number of facets involving $y$. So we can assume $y \in D$.\\
Then, since the subposet does not appear in the regular poset, there is a unique saturated chain starting in $y \prec ...$:\\
\begin{picture}(200,95)
\put(125,05){$a$}
\put(127,25){$\line(0,-1){10}$}
\put(125,30){$x$}
\put(127,50){$\line(0,-1){10}$}
\put(125,55){$y$}
\put(131,51){$\line(1,-1){15}$}
\put(145,30){$w$}
\put(145,05){$c$}
\put(147,15){$\vdots$}
\put(127,65){$\vdots$}
\put(125,80){$b$}

\end{picture}\\
This implies first, that $D_y =1$ and second there is a unique $c \in A$ with $a \prec y \prec c$ ($a$ again as in the proof of the theorem). This implies that 
\[
\sum_{a \prec c \in A} \sum_{a \prec y \prec c} D_i = \sum D_i = |D|.
\]
This implies equality in this induction step and hence the corollary.\\
Suppose there exists a star relation with $x_1, x_2 \prec x \prec x_3, x_4$ in the reduced poset. First of all, we can assume that $x$ covers $x_1$ and $x_2$. If neither $x_1$ nor $x_2$ is minimal in $\mathcal{P}\setminus A$, we can use the same induction as in the theorem (induction on the cardinality of $\mathcal{P}$) to prove the corollary. So we can wlog that $x_1$ is minimal in $\mathcal{P}\setminus A$. Further, we can assume that at least one of $x_3, x_4 \notin A$, so say $x_3 \in \mathcal{P}\setminus A$.\\
We perform the induction step with the element $x_1$. Then $x$ covers $x_1$, $x \in D$ (since $... x_2 \prec x$ is a chain) and we have $D_x \geq 2$, since $x_3$ and $x_4$ are incomparable\\
\begin{picture}(200,95)
\put(125,05){$a$}
\put(127,25){$\line(0,-1){10}$}
\put(125,30){$x_1$}
\put(143,50){$\line(-1,-1){10}$}
\put(143,55){$\line(-1,1){10}$}
\put(125,70){$x_3$}
\put(145,50){$x$}
\put(163,35){$\line(-1,1){10}$}
\put(165,30){$x_2$}
\put(165,70){$x_4$}
\put(167,70){$\line(-1,-1){10}$}
\put(167,13){$\vdots$}
\put(127,78){$\vdots$}
\put(167,78){$\vdots$}
\end{picture}\\
and hence there are at most two saturated chains $x \prec x_3 \prec \ldots, x \prec x_4 \prec \ldots$. But this implies that \eqref{d-eq} is a proper inequality and then the corollary follows.
\end{proof}


\section{Unimodular equivalence}\label{Section-Uni}
Let $P, Q \subset \mathbb{R}^N$ two convex polytopes. A \textit{unimodular equivalence} of $P, Q$ is a map $\Psi:  \mathbb{R}^N \longrightarrow  \mathbb{R}^N$, such that $\Psi(v) = M.v + w$, where $M \in M_{n \times n}(\bz)$ with $\operatorname{det }M  = \pm 1$ and $w$ is a constant vector and $\Psi(P) = Q$. Certainly, any two unimodular equivalent polytopes have the same f-vector. In this section we will prove
\begin{thm}
Let $(\mathcal{P}, A, \lambda)$ be a regular marked poset. Then  $\mathcal{O}(\mathcal{P}, A)_\lambda$ is unimodular equivalent to $\mathcal{C}(\mathcal{P}, A)_\lambda$ if and only if $\mathcal{P}$ has no star relation.
\end{thm}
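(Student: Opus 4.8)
The plan is to prove the two implications separately, since the facet bookkeeping is already available.

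\emph{Unimodular equivalence implies no star relation.} This is the easy direction. A unimodular equivalence is an affine lattice isomorphism, so it preserves the whole face lattice and in particular the $f$-vector; hence $\mathcal{O}(\mathcal{P},A)_\lambda$ and $\mathcal{C}(\mathcal{P},A)_\lambda$ have the same number of facets. As $(\mathcal{P},A,\lambda)$ is regular, Corollary~\ref{cor-facet} then forces $\mathcal{P}$ to have no star relation. So the entire content lies in the converse, for which I would exhibit an explicit affine map with unimodular linear part taking $\mathcal{O}(\mathcal{P},A)_\lambda$ onto $\mathcal{C}(\mathcal{P},A)_\lambda$.

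\emph{No star relation implies unimodular equivalence.} Write $\hat{\bs}_y=\bs_y$ for $y\in\mathcal{P}\setminus A$ and $\hat{\bs}_y=\lambda_y$ for $y\in A$, and put $\bt=\Psi(\bs)$. The structural input is the reading of the hypothesis: $\mathcal{P}$ has no star relation exactly when, for every $z$, either the set of elements below $z$ or the set above $z$ is a chain. Thus if $z\in\mathcal{P}\setminus A$ has at least two lower covers, the elements above $z$ form a chain, so $z$ has a unique upper cover and a unique saturated chain rising to a marked ceiling. I would then define, for each $z\in\mathcal{P}\setminus A$,
\[
\Psi(\bs)_z=\begin{cases}\bs_z-\hat{\bs}_w,& z\text{ has a unique lower cover }w,\\[1mm] \hat{\bs}_{r}-\bs_{r^-},& z\text{ has }\ge 2\text{ lower covers,}\end{cases}
\]
where in the second case $r$ is the lowest element strictly above $z$ on its (forced) upward chain that is marked or itself has $\ge 2$ lower covers, and $r^-$ is its predecessor on that chain, so $z\preceq r^-\lessdot r$. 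On unique-cover elements this is Stanley's transfer map; on branch elements it is an upward reflection that reaches \emph{past} the intervening unique-cover elements to the next genuine branching or to the marked ceiling.

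The verification has three parts. That $\Psi$ is well defined: two distinct branch elements never give the same coordinate, because descending from $r^-$ through unique lower covers produces a uniquely determined chain that meets the branch elements in a single point — this is precisely where no star relation enters. That $\Psi$ is unimodular: ordering $\mathcal{P}\setminus A$ by a linear extension, each $\Psi(\bs)_z$ has a single distinguished variable ($\bs_z$ for unique-cover $z$, $\bs_{r^-}$ for branch $z$) occurring with coefficient $\pm1$, and $z\mapsto$ (its distinguished variable) is a bijection, so the linear part is a signed permutation matrix corrected by a strictly triangular part and has determinant $\pm1$. That $\Psi(\mathcal{O}_\lambda)=\mathcal{C}_\lambda$: the inequality of each cover relation should map onto either a coordinate half-space $\bt_x\ge 0$ or a saturated-chain inequality $\bt_{x_1}+\dots+\bt_{x_n}\le\lambda_b-\lambda_a$. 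The mechanism is the telescoping identity $\sum \Psi(\bs)_u=\lambda_b-\bs_z$, where the sum runs over the upward chain from a branch element $z$ to the ceiling $b$; combined with the unique-cover covers this turns a cover into a branch element into exactly one saturated-chain facet. Since $(\mathcal{P},A,\lambda)$ is regular with no star relation, Lemma~\ref{lem-num-facets} and Corollary~\ref{cor-facet} give equal facet counts, so a facet-to-facet injection is automatically a bijection and $\Psi$ is the desired equivalence.

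The hardest part is the third check together with the consistency of the upward reflection. The naive transfer map, which differences every coordinate against its lower covers, is only piecewise linear (the offending term is a genuine maximum over several lower covers) and equals no single linear map; the point of the construction is that under no star relation the branch coordinates can be rerouted upward along the forced chain so as to linearise it. Choosing the reference $r$ correctly — not stopping too early, which would destroy surjectivity onto $\mathcal{C}_\lambda$, nor too late, which would create coordinate collisions — and then matching the telescoped chain inequalities is the delicate bookkeeping that no star relation is designed to permit. An alternative route is an induction on $|\mathcal{P}\setminus A|$ peeling off an extremal element of $\mathcal{P}\setminus A$: regularity makes its outer cover unique and no star relation makes one endpoint of its interval fibre constant, so a fixed reflection $\bs_z\mapsto c-\bs_z$ transports the fibre; but closing this induction requires carrying along the piecewise-linear fibre-length functions, which is the same obstacle in disguise.
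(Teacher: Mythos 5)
Your proof follows the same route as the paper's: the ``only if'' direction is exactly the paper's appeal to Corollary~\ref{cor-facet}, and the ``if'' direction is an explicit affine transfer map built on the observation that, in the absence of star relations, every element with two or more lower covers has a forced saturated chain above it. Your map is in fact a (workable) variant of the paper's: on a forced segment $z \lessdot u_1 \lessdot \cdots \lessdot u_m \lessdot r$ above a branch element $z$, the paper distributes the linear forms $\bs_{u_1}-\bs_z,\ \bs_{u_2}-\bs_{u_1},\ \ldots,\ \hat{\bs}_r-\bs_{u_m}$ to the coordinates $z,u_1,\ldots,u_m$ in that order (difference against the element directly above, reflection at the top), while you distribute the same forms cyclically shifted, putting $\hat{\bs}_r-\bs_{u_m}$ at $z$ itself. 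Since every saturated chain meeting such a segment contains all of it (each $u_j$ has $u_{j-1}$ as its unique lower cover and $u_{j+1}$ as its unique cover), the two assignments satisfy identical chain inequalities; your telescoping identity $\sum_{z\preceq u\prec b}\Psi(\bs)_u=\lambda_b-\bs_z$ is correct, your well-definedness argument is sound, and your facet-hyperplane route to surjectivity is essentially the check the paper performs.

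The genuine gap is the unimodularity step. The claim that $z\mapsto$ (its distinguished variable) is a bijection fails whenever some branch element $z$ has $r^-\neq z$, i.e.\ whenever the next branch-or-marked element $r$ is not a cover of $z$. Concretely, take $a\lessdot z_1,z_2$, $z_1,z_2\lessdot x$, $x\lessdot y$, $y\lessdot b$ with $a,b$ marked: then $r(x)=b$ and $r^-(x)=y$, so your map reads $(\bs_{z_1},\,\bs_{z_2},\,\lambda_b-\bs_y,\,\bs_y-\bs_x)$, the coordinates at $x$ and at $y$ both have distinguished variable $\bs_y$, and no coordinate has distinguished variable $\bs_x$. (The general principle you invoke is also false on its own: a signed permutation plus a strictly triangular matrix need not have determinant $\pm1$, e.g.\ $\left(\begin{smallmatrix}0&1\\2&0\end{smallmatrix}\right)$.) Your map is nevertheless unimodular --- in the example the relevant $2\times2$ block $\left(\begin{smallmatrix}0&-1\\-1&1\end{smallmatrix}\right)$ has determinant $-1$ --- and the clean proof is the one your telescoping already provides: the inverse is given by integral formulas, namely $\bs_z=\lambda_{b^*}-\sum_{z\preceq u\prec b^*}\bt_u$ for every branch element and every element above one (with $b^*$ the first marked element up the forced chain), and $\bs_x=\bt_x+\hat{\bs}_{w(x)}$ bottom-up elsewhere; so the linear part $U$ and its inverse are both integer matrices, forcing $\det U=\pm1$. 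Replace the distinguished-variable argument by this, and carry out the cover-by-cover verification you sketch, and your proof closes.
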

\begin{proof}
Recall that regular implies that $\lambda_a \neq \lambda_b$ for $a \neq b$ in $A$. Now suppose $\mathcal{P}$ has such a star relation, then by Corollary~\ref{cor-facet} the number of facets in the marked order and the marked chain polytope is different, so they are not unimodular equivalent. \\
If the marked poset has no star relation, then we construct a map $\Psi: \mathcal{C}(\mathcal{P}, A)_\lambda \longrightarrow \mathcal{O}(\mathcal{P}, A)_\lambda$ defining the unimodular equivalence. \\
By assumption, for any $x \in \mathcal{P}\setminus A$ such that there exists two distinct saturated chains $x \prec \ldots \prec b, x \prec \ldots \prec c$ for some $b,c \in A$, there exists at most one saturated chain $a \prec \ldots \prec x$. Any maximal $x$ is covered by exactly one element $b \in A$ and any minimal $x$ covers exactly one $a \in A$. We define $\Psi $ as follows for $x \in \mathcal{P}\setminus A$:
\begin{enumerate}
\item If $x$ is maximal but not minimal in $\mathcal{P}\setminus A$:  $\Psi(\bs_x) = \lambda_b - \bs_x $.
\item If $x$ is minimal in $\mathcal{P}\setminus A$: $\Psi(\bs_x) = \bs_x + \lambda_a$.
\item Else if $x \prec y_1 \prec \ldots \prec y_s \prec b$ is the unique saturated chain starting in $x$: $\Psi(x) = \lambda_b - x - y_1 - \ldots - y_s$.
\item Else if $a \prec y_1 \prec \ldots  \prec y_s \prec x$ is the unique saturated chain ending in $x$: $\Psi(x) = x + y_1 + \ldots + y_s + \lambda_a$.
\end{enumerate}
So we can write $\Psi = U + w$, where $ U \in  M_{\mathcal{P}\setminus A}(\bz)$ with $\operatorname{det } U = \pm 1$ and $w \in \mathbb{R}^{\mathcal{P} \setminus A}$. It remains to show that $\Psi$ restricted to $ \mathcal{C}(\mathcal{P}, A)_\lambda$ induces a bijection to  $\mathcal{O}(\mathcal{P}, A)_\lambda$.\\
Let 
\[
a = x_0 \prec x_1 \prec \ldots \prec x_{k-1} \prec x_k \prec \ldots \prec x_s \prec x_{s+1} = b
\]
be a saturated chain, such that $x_s$ is maximal, $x_1$ is minimal in $\mathcal{P}\setminus A$ and $x_k$ is the minimal element in the chain having a unique saturated chain starting in $x_k$. Then $\Psi$ maps this chain to
\[
(\bs_{x_1} + \lambda_a, \ldots, \bs_{x_1}+ \ldots + \bs_{x_{k-1}} + \lambda_a, \lambda_b - \bs_{x_s} - \ldots  - \bs_{x_k}, \ldots, \lambda_b - \bs_{x_s}).
\]
We see that for $i \geq k$ (resp. $i < k$):
\[
\lambda_b - \bs_{x_s} - \ldots  - \bs_{x_i} \leq \lambda_b - \bs_{x_s} - \ldots  - \bs_{x_{i+1}} \text{ and } \bs_{x_i}+ \ldots + \bs_{x_{1}} + \lambda_a \geq \bs_{x_{i-1}}+ \ldots + \bs_{x_{1}} + \lambda_a.
\] 
Further, since $\lambda_b - \lambda_a \geq \bs_{x_1} + \ldots \bs_{x_s}$, we have $\bs_{x_1}+ \ldots + \bs_{x_{k-1}} + \lambda_a \leq \lambda_b - \bs_{x_s} - \ldots  - \bs_{x_k}$. This implies that $\Psi(\mathcal{C}(\mathcal{P}, A)_\lambda) \subset \mathcal{O}(\mathcal{P}, A)_\lambda$. 
The inverse of $\Psi$ is given by 
\[
\Psi^{-1}(\bt_{x_i}) = \begin{cases} \lambda_b - \bt_{x_i} \text{ if } i = s \\ \bt_{x_i} - \lambda_a \text{ if } i=1 \\ \bt_{x_{i+1}} - \bt_{x_i} \text{ if } i \geq k \\ \bt_{x_i} - \bt_{x_{i-1}} \text{ if } i < k \end{cases}
\]
and with a similar argument we see that $\Psi^{-1}(\mathcal{O}(\mathcal{P}, A)_\lambda) \subset \mathcal{C}(\mathcal{P}, A)_\lambda$. Now it is straightforward to see that $\Psi^{-1}(\bt_{x_i} = \bt_{x_{i+1}})$ is the facet of the marked order polytope given by
\begin{itemize}
\item $\bs_{x_i} = 0$ if $s \geq i \geq k$.
\item $\bs_{x_{i-1}} = 0$ if $0 \leq i < k-1$.
\item $\lambda_b - \lambda_a = \bs_{x_1} + \ldots \bs_{x_s}$ if $i = k-1$.
\end{itemize}
Hence, faces are mapped to facets and $\Psi$ defines a unimodular equivalence.
\end{proof}


\section{Indecomposables}\label{Section-Inde}
In this section we want to study the indecomposable marked chain polytopes. We will prove first:
\begin{lem}
Let $\lambda \in (\mathcal{Q}_A)_0$ and suppose there exists $a \in A$ with $\lambda_a > 1$. Then $\lambda$ is not $\mathcal{C}$-indecomposable.
\end{lem}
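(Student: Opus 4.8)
The plan is to exhibit an explicit nontrivial splitting $\lambda = \mu + \tau$ with $\mu,\tau \in \mathcal{Q}_A$ and to show it induces a Minkowski decomposition $S_{\mathcal{C}}(\lambda) = S_{\mathcal{C}}(\mu) + S_{\mathcal{C}}(\tau)$ of lattice points. I would take $\tau$ to be the indicator of the positive marks and $\mu$ the remainder,
\[
\tau_a = \min(\lambda_a, 1), \qquad \mu_a = \lambda_a - \tau_a = \max(\lambda_a - 1, 0), \qquad a \in A.
\]
Since $\lambda_a \le \lambda_b$ whenever $a \prec b$, both $\tau$ and $\mu$ are again weakly monotone along $\prec$, hence lie in $\mathcal{Q}_A$. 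Using the hypothesis $\lambda_a > 1$ for some $a$, that coordinate gives $\tau_a = 1$ and $\mu_a = \lambda_a - 1 \ge 1$, so neither $\mu$ nor $\tau$ is the zero vector. Thus, once the Minkowski identity is verified, $\lambda$ is exhibited as $\mathcal{C}$-decomposable.

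One inclusion is immediate. The defining inequalities of $\mathcal{C}(\mathcal{P},A)_\gamma$ read $\bs_{x_1} + \ldots + \bs_{x_n} \le \gamma_b - \gamma_a$, and the right-hand sides are additive in the marking, $\lambda_b - \lambda_a = (\mu_b - \mu_a) + (\tau_b - \tau_a)$. Hence if $\bs' \in S_{\mathcal{C}}(\mu)$ and $\bs'' \in S_{\mathcal{C}}(\tau)$, then $\bs' + \bs''$ is an integral point of $\mathcal{C}(\mathcal{P},A)_\lambda$, which gives $S_{\mathcal{C}}(\mu) + S_{\mathcal{C}}(\tau) \subseteq S_{\mathcal{C}}(\lambda)$.

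The content is the reverse inclusion: every integral $\bs \in S_{\mathcal{C}}(\lambda)$ must be written as $\bs = \bs' + \bs''$ with $\bs'' \in S_{\mathcal{C}}(\tau)$ and $\bs - \bs'' \in S_{\mathcal{C}}(\mu)$, all integral. Because $\tau_b - \tau_a \in \{0,1\}$, equal to $1$ exactly when $\lambda_a = 0 < \lambda_b$, the point $\bs''$ is a $0/1$ vector that must vanish on every element lying on a chain with positively marked bottom, must meet each chain from a $0$-marked bottom to a positive top in total at most once, and, by the $\mu$-inequalities, must satisfy $\sum_C \bs''_{x_i} = 1$ precisely on those chains $C$ that are tight for $\bs$ (where $\sum_C \bs_{x_i} = \lambda_b$). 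So the task is to select $\bs''$ with $0 \le \bs'' \le \bs$ hitting each tight chain exactly once and every other admissible chain at most once.

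I would construct such a $\bs''$ by induction on $|\mathcal{P}\setminus A|$, peeling off a minimal element $x_0$ of $\mathcal{P}\setminus A$ as in the facet computation of Section~\ref{Section-Facets}, fixing the single value $\bs''_{x_0}$ so as to meet the tight chains through $x_0$, and passing to the full subposet $(\mathcal{P}\setminus\{x_0\}, A, \lambda)$. The main obstacle is exactly the global consistency of these local choices: a naive rule such as \emph{mark the lowest support element on each chain} fails when chains branch below a common vertex, so the chains cannot be treated independently. I expect to resolve this by observing that the real relaxation is feasible, since for $\lambda = \mu + \tau$ the marked chain polytopes are Minkowski summands, giving $\bs \in \mathcal{C}(\mathcal{P},A)_\mu + \mathcal{C}(\mathcal{P},A)_\tau$; the admissible $\bs''$ then form the integer points of a lattice polytope cut out by these chain-incidence constraints together with the box $0 \le \bs'' \le \bs$, and the inductive peeling upgrades the fractional splitting to an integral one.
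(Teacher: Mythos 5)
Your splitting of the marking is exactly the paper's: your $\tau$ is the paper's $\omega$ (indicator of the positive marks) and $\mu = \lambda - \omega$, the easy inclusion $S_{\mathcal{C}}(\mu) + S_{\mathcal{C}}(\tau) \subseteq S_{\mathcal{C}}(\lambda)$ is argued the same way, and your list of conditions that $\bs''$ must satisfy (a $0$--$1$ vector with $\bs'' \leq \bs$, vanishing above positive marks, hitting each chain with $\lambda_a = 0 < \lambda_b$ at most once and each $\bs$-tight such chain exactly once) is correct. But the reverse inclusion, which you rightly call the content of the lemma, is not actually proved. There are two genuine gaps. First, you invoke ``for $\lambda = \mu + \tau$ the marked chain polytopes are Minkowski summands'' to get a fractional splitting of $\bs$; this is precisely the real relaxation of the statement being proved, it is established nowhere in the paper before this lemma, and assuming it is circular. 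Second, even granted a fractional splitting, the claim that ``the inductive peeling upgrades the fractional splitting to an integral one'' is not an argument: the crux, which you yourself identify (chains branching below a common vertex, so per-chain choices conflict), is exactly the question of \emph{which} value $\bs''_{x_0} \in \{0,1\}$ to fix at each peeled minimal element so that all chains through it, and all chains yet to be processed, remain satisfiable -- and no rule is given.

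The missing idea is a single global definition rather than a per-chain or greedy inductive rule. Set $M = \{\, p \in \mathcal{P}\setminus A \mid \bs_p \neq 0 \,\} \cup \{\, b \in A \mid \lambda_b \neq 0 \,\}$ and let $\bs'' = \bt$ be the indicator vector of the set $M_{min}$ of minimal elements of $M$. Global minimality immediately handles your branching obstruction: any two support elements on a common chain are comparable, so at most one of them can be minimal in $M$, giving $\sum_C \bt \leq 1$ on every chain; and since positively marked elements of $A$ belong to $M$, no element above a positive mark is ever in $M_{min}$, so $\bt$ vanishes where it must, i.e. $\bt \in S_{\mathcal{C}}(\tau)$. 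For $\bs - \bt \in S_{\mathcal{C}}(\mu)$ the only nontrivial chains $a \prec b_1 \prec \ldots \prec b_r \prec b$ are those with $\lambda_a = 0 < \lambda_b$, and one argues by cases on the lowest support element $b_i$ of the chain: if $b_i \in M_{min}$, subtracting $\bt$ lowers the chain sum by $1$, as required; if $b_i \notin M_{min}$, there is some $c \prec b_i$ with $c \in M$, and rerouting the chain through $c$ (extending downward to some $d \in A$ with $\lambda_d \geq 0 = \lambda_a$ if $c \notin A$) shows that $\sum_j \bs_{b_j} \leq \lambda_b - \lambda_a - 1$ held already, so the $\mu$-inequality is satisfied even without help from $\bt$. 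This is the paper's proof; it is a direct verification needing no induction on $|\mathcal{P}\setminus A|$ and no appeal to the real polytopes, and it is the step your proposal defers rather than supplies.
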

\begin{proof}
Let $\lambda \in (\mathcal{Q}_A)_0$ and let $\omega \in (\mathcal{Q}_A)_0$ defined as in:
\[
\omega_b := \begin{cases} 1 \text{ if } \lambda_b \neq 0 \\ 0 \text{ if } \lambda_b = 0 \end{cases}.
\]
We will prove that the lattice points in the marked chain polytopes can be decomposed into lattice points for $\omega$ and lattice points for $\lambda - \omega$. This implies the lemma.\\

\noindent\textit{Proof:}
Let $\bs \in S_{\mathcal{C}}(\lambda)$ and define
\[
M = \{ p \in \mathcal{P}\setminus A \mid s_b \neq 0 \} \cup \{ b \in A \mid  \lambda_b \neq 0 \}.
\]
Then $M$ is by restriction again a partially ordered set and we denote $M_{min}$ the subset of minimal elements in $M$. We define $\bt \in \bz^{\mathcal{P} \setminus A}$ as
\begin{eqnarray}
\bt_b := \begin{cases} 1 \text{ if } b \in M_{min} \\ 0 \text{ if } b \notin M_{min} \end{cases}
\end{eqnarray}
Claim: $\bt \in S_{\mathcal{C}}(\omega)$.\\
\textit{Proof of the claim:} Let $\bp = a \prec b_1 \prec \ldots \prec b_r \prec b$ be a chain with $a,b \in A$, $b_i \in \mathcal{P}\setminus A$. Since $M_{min}$ is the set of minimal elements, we have $\sum \bs_{b_i} \leq 1$ and even more, if $\lambda_a \neq 0$, then $\sum \bs_{b_i} = 0$. So the only case, that needs to be considered is $\lambda_b = \lambda_a = 0$. But in this case $\bs_x = 0$ for all $a \prec x \prec b$. This proves the claim.\\

Claim: $\bs - \bt \in S_{\mathcal{C}}(\lambda - \omega)$.\\
\textit{Proof of the claim:}  Let $\bp = a \prec  b_1 \prec \ldots \prec b_r \prec b$ be a chain with $a,b \in A$, $b_i \in \mathcal{P}\setminus A$. We have to prove that
\[
\sum \bs_{b_i} - \sum \bt_{b_i} \leq \lambda_b - \lambda_a - \omega_b + \omega_a.
\]
The left hand side is certainly less or equals to $\sum \bs_{b_i}$ and by assumption $\sum \bs_{b_i} \leq \lambda_b - \lambda_a$. So if $\omega_b = \omega_a$, then there is nothing to show. Suppose now $\omega_b \neq \omega_a$, then $\omega_b =1, \omega_a =0, \lambda_b \neq 0, \lambda_a = 0$. \\
Suppose there exists $b_i$ with $\bs_{b_i} \neq 0$ (if all $\bs_{b_i}$ are equal to $0$, then the claim follows immediately). We may assume that $b_i$ is the minimal element in the chain $\bp$ with $\bs_{b_i} \neq 0$. There are two cases to be considered:\\
First, $b_i \in M_{min}$, then $\sum \bt_{b_i} > 1$ and so $\sum \bs_{b_i} - \sum \bt_{b_i} \leq \lambda_b - \lambda_a - \omega_b + \omega_a.$\\
Second $b_i \notin M_{min}$, then there exists $c  \prec b_i$ with either $\bs_c \neq 0$ or $\lambda_c \neq 0$, we may assume that $c$ is minimal. In the first case, $\bs_c \neq 0$, we consider the chain $\bp' = c \prec b_i \prec b_{i+1} \prec \ldots \prec b_r \prec b$. We can extend this chain by adding an element $d$ from $A$ which is less than the elements from $\bp'$ (recall that $A$ contains all minimal elements). Then by assumption
\[ 
\bs_c + \bs_{b_i} + \ldots + \bs_{b_r} \leq \lambda_b - \lambda_d \text{ and } \bs_c =1
\]
Since $\lambda_a = 0$ we have $\lambda_b - \lambda_d \leq \lambda_b - \lambda_a$ and so
\[
\bs_{b_{1}} + \ldots + \bs_{b_r} = \bs_{b_i} + \ldots + \bs_{b_r} < \lambda_b - \lambda_a
\]
which implies the claim in this case.\\
The remaining case is $\lambda_c \neq 0$, then we consider the chain $c \prec b_i \prec \ldots \prec b_r \prec b$. By assumption we have
\[
\bs_{b_i} + \ldots + \bs_{b_r} \leq \lambda_b - \lambda_c < \lambda_b - \lambda_a
\]
(since $\lambda_a = 0$) which again implies the claim.\\
Combining both claims we have
\[
S_{\mathcal{C}}(\lambda) = S_{\mathcal{C}}(\lambda - \omega) + S_{\mathcal{C}}(\omega)
\]
and the marked chain statement of the main theorem follows now by downward induction on $\lambda$.\\

\end{proof}

\subsection{}
We continue the proof of Theorem~\ref{main-thm2}. 
\begin{prop}
Let $\lambda \in (\mathcal{Q}_A)_0$ with $\lambda_a \in \{0,1\}$. If the reduced poset of $\mathcal{P}$ is not connected then $\lambda$ is not $\mathcal{C}$-indecomposable. 
\end{prop}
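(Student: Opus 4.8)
The plan is to produce an explicit nontrivial Minkowski decomposition of $S_{\mathcal{C}}(\lambda)$ dictated by the connected components of the reduced poset, so that $\lambda$ fails to be $\mathcal{C}$-indecomposable by definition. Write $A_0 = \{a \in A : \lambda_a = 0\}$ and $A_1 = \{b \in A : \lambda_b = 1\}$, so $A = A_0 \sqcup A_1$ with $A_0 \neq \emptyset$, and let $R = \overline{(\mathcal{P}, A, \lambda)}$. I would first isolate two structural facts. (i) If $x \in \mathcal{P}\setminus A$ is \emph{not} in $R$, then $\bs_x = 0$ for every $\bs \in S_{\mathcal{C}}(\lambda)$: since $x$ is removed it lies on a saturated chain $a_1 \prec \cdots \prec x \prec \cdots \prec a_2$ whose endpoints $a_1,a_2 \in A$ are both marked $0$ (monotonicity of $\lambda$ forces this once $x \preceq a'$ with $\lambda_{a'}=0$), and the chain inequality $\sum \bs \leq \lambda_{a_2}-\lambda_{a_1}=0$ pins $\bs_x$ to $0$. (ii) If two elements of $R$ are comparable, the saturated chain joining them stays inside $R$, hence they lie in one component; consequently, for any chain $a \prec x_1 \prec \cdots \prec x_n \prec b$ all the $x_i$ that happen to lie in $R$ belong to a single component.

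Next I partition the components of $R$ into two nonempty groups, writing the vertex set as $V_1 \sqcup V_2$ with no comparabilities across the split, and define $\mu, \tau \in \{0,1\}^A$ by $\mu_b = \lambda_b$ if $b \in V_1$, $\mu_b = 0$ otherwise, and symmetrically $\tau$ on $V_2$. Because every element of $A_1$ lies in $R$ (it cannot sit below a $0$-marked element) while every element of $A_0$ is removed, each $b \in A_1$ lies in exactly one of $V_1, V_2$ and each $a \in A_0$ lies in neither, giving $\mu + \tau = \lambda$. I would then verify $\mu \in \mathcal{Q}_A$: if $a \prec b$ and $\mu_a = 1$ then $a \in A_1 \cap V_1$, monotonicity gives $b \in A_1 \subseteq R$, and comparability inside $R$ forces $b \in V_1$, so $\mu_b = 1$; likewise for $\tau$. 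Both are nonzero because each component of $R$ contains a maximal element of $\mathcal{P}$, which lies in $A$ and must be marked $1$, so each of $V_1, V_2$ meets $A_1$.

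It then remains to prove $S_{\mathcal{C}}(\lambda) = S_{\mathcal{C}}(\mu) + S_{\mathcal{C}}(\tau)$. The inclusion $S_{\mathcal{C}}(\mu) + S_{\mathcal{C}}(\tau) \subseteq S_{\mathcal{C}}(\lambda)$ is immediate from $\mu + \tau = \lambda$, since summing two admissible vectors yields every chain inequality with right-hand side $(\mu_b+\tau_b)-(\mu_a+\tau_a) = \lambda_b-\lambda_a$. For the reverse inclusion, given $\bs \in S_{\mathcal{C}}(\lambda)$ I would set $\bs^{(1)}_x = \bs_x$ for $x \in V_1 \cap (\mathcal{P}\setminus A)$ and $0$ elsewhere, and $\bs^{(2)}$ symmetrically; fact (i) guarantees $\bs^{(1)} + \bs^{(2)} = \bs$, as all removed coordinates already vanish.

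The crux, and the step I expect to be the main obstacle, is checking $\bs^{(1)} \in S_{\mathcal{C}}(\mu)$ (and $\bs^{(2)} \in S_{\mathcal{C}}(\tau)$ by symmetry). Fix a chain $a \prec x_1 \prec \cdots \prec x_n \prec b$; I must bound $\sum_i \bs^{(1)}_{x_i} = \sum_{x_i \in V_1}\bs_{x_i}$ by $\mu_b - \mu_a \in \{0,1\}$. If $\mu_b-\mu_a = 1$ this is clear, since $\sum_{x_i\in V_1}\bs_{x_i} \leq \sum_i \bs_{x_i} \leq \lambda_b-\lambda_a \leq 1$. If $\mu_b-\mu_a = 0$, I would show the left-hand side vanishes: were some $x_{i_0}\in V_1$ to carry $\bs_{x_{i_0}}>0$, then $x_{i_0}\in R$ forces $\lambda_b = 1$ (else $x_{i_0}\preceq b$ with $\lambda_b=0$ would remove it) and, by fact (ii), $b \in V_1$, so $\mu_b = 1$; then $\mu_a = 1$ forces $\lambda_a = 1$, and the chain inequality $\sum_i \bs_{x_i}\leq \lambda_b-\lambda_a = 0$ contradicts $\bs_{x_{i_0}}>0$. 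Hence $\bs^{(1)} \in S_{\mathcal{C}}(\mu)$, the decomposition $\bs = \bs^{(1)}+\bs^{(2)}$ is valid, and with $\mu,\tau$ both nonzero this shows $\lambda$ is not $\mathcal{C}$-indecomposable.
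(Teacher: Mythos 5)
Your proof is correct and takes essentially the same route as the paper: you split the connected components of the reduced poset $R$ into two nonempty groups, restrict $\lambda$ to each group to obtain the summands $\mu,\tau \in \mathcal{Q}_A$, and decompose each lattice point $\bs \in S_{\mathcal{C}}(\lambda)$ by restricting its support, using exactly the paper's two key facts (coordinates outside $R$ vanish, and comparable elements of $R$ lie in one component). The only differences are cosmetic improvements: your forward inclusion $S_{\mathcal{C}}(\mu)+S_{\mathcal{C}}(\tau)\subseteq S_{\mathcal{C}}(\lambda)$ follows by simply adding the chain inequalities (avoiding the paper's disjointness claim), and you explicitly verify that both summands are nonzero via maximal elements, a point the paper leaves implicit.
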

\begin{proof}
So suppose the reduced poset decomposes into a disjoint union of the Hasse diagram, e.g.
\[
\overline{(\mathcal{P}, A, \lambda)} = P_1 \cup P_2
\]
and set $A_j = A \cap P_j$. We define further
\[
\lambda^j_a = \begin{cases} \lambda_a \text{ if } a \in A_j \\ 0 \text{ else } \end{cases}
\]
First of all, $\lambda^j \in (\mathcal{Q}_A)_0$: $\lambda^j_a \in \{0,1\}$ since $\lambda_a \in \{0,1\}$. Further, suppose $\exists \, a \prec b$ with $\lambda^j_a =1$. Since $a \prec b$ and $\lambda^j_a =1 = \lambda_a$, we have $\lambda_b = 1$. 
This implies that $a,b \in  \overline{(\mathcal{P}, A, \lambda)} $ and hence $a,b$ are in a common connected component, and so $a,b \in P_j$. This implies $\lambda^j_b =1$. \\
Further $\lambda = \lambda^1 + \lambda^2$, this is easy to see: Suppose $\lambda_b = 1$, then $b \in \overline{(\mathcal{P}, A, \lambda)}$ and so $b \in A^1 \cup A^2$, and so $\lambda^1_b + \lambda^2_b =1$. Certainly, if $\lambda_b = 0$, then $\lambda^1_b + \lambda^2_b =0$.\\

Now, we have to show that the lattice points in marked chain polytopes are the Minkowski sums of the lattice points in the polytopes corresponding to $\lambda^1, \lambda^2$.\\

\noindent\textit{Proof:}
Let $x_1 \prec \ldots \prec x_s$ be a chain in $\mathcal{P}\setminus A$. Let $\bs \in  S_{\mathcal{C}}(\lambda^1), \bt \in  S_{\mathcal{C}}(\lambda^2)$, then either $\sum  \bs_{x_i} = 0$ or $\sum \bt_{x_i} = 0$. To see this, suppose $\sum \bs_{x_i} \neq 0$, then there exists $a,b \in A^1$ with $\lambda_a^1 =0, \lambda_b^1 = 1$ and 
\[
a \prec x_1 \prec \ldots \prec x_s \prec b
\]
and even more $\lambda^2_b = 0$. But this implies $\sum \bt_{x_i} \leq \lambda^2_b - \lambda_a^2 = 0$. So we have $\bs + \bt \in  S_{\mathcal{O}}(\lambda)$.\\
On the other hand, let $\bs \in  S_{\mathcal{C}}(\lambda)$ and define
\[
\bs^j_x := \begin{cases} s_x \text{ if } x \in P^j \setminus A^j \\ 0 \text{ else } \end{cases}
\]
then $\bs = \bs^1 + \bs^2$. And we are left to show that $\bs^j \in S_{\mathcal{C}}(\lambda^j)$. So suppose $\bs^j_{x} \neq 0$, then for all chains $x_1 \prec \ldots \prec x \prec \ldots \prec x_s$ in $\mathcal{P}\setminus A$, we have $\bs^j_{x_k} = 0$, since $\bs \in S_{\mathcal{C}}(\lambda)$. Further, there exists $b \in A$ with $\lambda_b = 1$ and $x \prec b$, and if $a \prec x$, $a \in A$, then $\lambda_a = \lambda^j_a = 0$. This implies that $x,b \in  \overline{(\mathcal{P}, A, \lambda)}$ and hence in a common connected component, so $\lambda^j_b =1$, which finishes the proof.\\
\end{proof}

\subsection{}
The very last thing in the proof of Theorem~\ref{main-thm2}:
\begin{prop}
Let $\lambda \in (\mathcal{Q}_A)_0$ with $\lambda_a \in \{0,1\}$. Suppose $\overline{(\mathcal{P}, A, \lambda)}$ is connected, then $\lambda$ is $\mathcal{C}$-indecomposable.\\
\end{prop}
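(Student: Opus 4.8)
The plan is to argue by contradiction: assume $S_{\mathcal{C}}(\lambda) = S_{\mathcal{C}}(\mu) + S_{\mathcal{C}}(\tau)$ with $\mu,\tau \in \mathcal{Q}_A$, and show that one summand must be trivial (its polytope a single point), i.e.\ $\mu = 0$ or $\tau = 0$. The first step is to record that, since $\lambda_a \in \{0,1\}$, every chain bound $\lambda_b - \lambda_a$ lies in $\{0,1\}$; hence each coordinate of a lattice point is at most $1$, so $S_{\mathcal{C}}(\lambda) \subseteq \{0,1\}^{\mathcal{P}\setminus A}$ and its points are exactly the indicator vectors of antichains avoiding every chain with $\lambda_b = \lambda_a$. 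Because the Minkowski sum of $S_{\mathcal{C}}(\mu)$ and $S_{\mathcal{C}}(\tau)$ lands in $\{0,1\}^{\mathcal{P}\setminus A}$, both summands are themselves $0$--$1$, and, crucially, no coordinate $x$ can equal $1$ simultaneously in some $\mathbf{u} \in S_{\mathcal{C}}(\mu)$ and some $\mathbf{v} \in S_{\mathcal{C}}(\tau)$.

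Next I would extract the combinatorial skeleton. Maximising the linear chain functional $\bs \mapsto \sum_i \bs_{x_i}$ attached to a saturated chain $a \prec x_1 \prec \cdots \prec b$ over the Minkowski sum gives the additive relation $\lambda_b - \lambda_a = (\mu_b - \mu_a) + (\tau_b - \tau_a)$ for every comparable pair in $A$ (here I use that the marked chain polytope is a lattice polytope, so the facet value is attained at a lattice point); thus every unit increment of $\lambda$ is carried entirely by $\mu$ or entirely by $\tau$. Writing $V_\mu$ (resp.\ $V_\tau$) for the set of $x \in \mathcal{P}\setminus A$ with $\mathbf{u}_x = 1$ for some $\mathbf{u} \in S_{\mathcal{C}}(\mu)$ (resp.\ for $\tau$), the disjointness above reads $V_\mu \cap V_\tau = \emptyset$, while $V_\mu \cup V_\tau$ is precisely the set of \emph{free} vertices of the reduced poset (those $x$ for which $\mathbf{e}_x$ is already a lattice point). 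A nontrivial decomposition means both $V_\mu$ and $V_\tau$ are nonempty.

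The heart of the argument is then to convert connectivity of $\overline{(\mathcal{P},A,\lambda)}$ into a contradiction. If $x \in V_\mu$ and $y \in V_\tau$ were comparable, then $\mathbf{e}_x$ and $\mathbf{e}_y$ are lattice points of the two summands, so $\mathbf{e}_x + \mathbf{e}_y \in S_{\mathcal{C}}(\lambda)$; but a chain through both $x$ and $y$ would then have chain-sum $2 > \lambda_b - \lambda_a$, which is impossible. Hence no element of $V_\mu$ is comparable to any element of $V_\tau$. I would then walk along the Hasse diagram of the reduced poset: since it is connected and both $V_\mu, V_\tau$ are nonempty, there is a cover relation of $\overline{(\mathcal{P},A,\lambda)}$ joining the $\mu$-region to the $\tau$-region, and tracing the saturated chain realising it (through the intervening $1$-marked and forced-zero vertices) should produce exactly such a comparable cross-pair, the desired contradiction. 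Therefore $V_\mu = \emptyset$ or $V_\tau = \emptyset$, i.e.\ $\mu = 0$ or $\tau = 0$.

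I expect the genuine difficulty to sit in this last propagation step, and specifically in the role of the $1$-marked and forced-zero vertices. The partition $V_\mu \sqcup V_\tau$ only splits the \emph{variable} coordinates, whereas connectivity is asserted for the whole reduced poset, which also contains the $1$-marked elements; a single $1$-marked vertex can be approached from below along both a $\mu$-chain and a $\tau$-chain, so one cannot naively colour vertices by type. The key technical point I would need to pin down is that the additive difference relation, together with the requirement that $\mu,\tau$ be honest elements of $\mathcal{Q}_A$, forbids such mixing at a connecting vertex, so that a crossing cover edge really does yield a comparable pair with one endpoint in $V_\mu$ and the other in $V_\tau$. Establishing this incompatibility --- that a true support split of the free vertices forces a disconnection of $\overline{(\mathcal{P},A,\lambda)}$ --- is the crux, and is exactly where the connectivity hypothesis must be used.
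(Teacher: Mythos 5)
The heart of your argument --- the propagation step converting connectivity into a comparable cross-pair --- is not merely left open, it is false under the hypotheses you allow yourself, so the gap is fatal rather than technical. Consider $\mathcal{P} = \{a_1, a_2, x, y, b\}$ with cover relations $a_1 \prec x \prec b$ and $a_2 \prec y \prec b$, marked set $A = \{a_1, a_2, b\}$, and $\lambda_{a_1} = \lambda_{a_2} = 0$, $\lambda_b = 1$. The reduced poset is $\{x, y, b\}$, whose Hasse diagram $x \prec b \succ y$ is connected, and all markings lie in $\{0,1\}$. Now take $\mu = (\mu_{a_1}, \mu_{a_2}, \mu_b) = (0,1,1)$ and $\tau = (1,0,1)$: both lie in $\mathcal{Q}_A$ (even in $(\mathcal{Q}_A)_0$), both are nonzero, and
\[
S_{\mathcal{C}}(\mu) = \{0, \mathbf{e}_x\}, \qquad S_{\mathcal{C}}(\tau) = \{0, \mathbf{e}_y\}, \qquad
S_{\mathcal{C}}(\mu) + S_{\mathcal{C}}(\tau) = \{0, \mathbf{e}_x, \mathbf{e}_y, \mathbf{e}_x + \mathbf{e}_y\} = S_{\mathcal{C}}(\lambda).
\]
Here $V_\mu = \{x\}$ and $V_\tau = \{y\}$ are nonempty, disjoint, contain no comparable cross-pair, and the reduced poset is glued together through the $1$-marked vertex $b$ --- exactly the ``mixing at a connecting vertex'' you flagged in your last paragraph. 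So that mixing cannot be ruled out from the Minkowski-sum hypothesis alone, and your concluding inference fails. Two subsidiary steps are also unsound: the maximum of a chain functional over $S_{\mathcal{C}}(\lambda)$ need not equal $\lambda_b - \lambda_a$ (other chain constraints can bind first, e.g.\ when the chain also sits between two equally marked elements), so your ``additive relation'' on differences is not established --- and in the example above that relation actually holds for every pair, so even if established it is too weak; moreover $V_\tau = \emptyset$ only says $S_{\mathcal{C}}(\tau) = \{0\}$, which does not force $\tau = 0$.

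What rescues the proposition in the paper is an assumption you never use: the paper's proof takes a decomposition to mean $\lambda = \lambda^1 + \lambda^2$ \emph{together with} $S_{\mathcal{C}}(\lambda) = S_{\mathcal{C}}(\lambda^1) + S_{\mathcal{C}}(\lambda^2)$, i.e.\ additivity of the markings themselves (this is its very first line, and it excludes the example above, where $\mu + \tau = (1,1,2) \neq \lambda$). Additivity is precisely what lets one two-colour the \emph{marked} vertices as well as the free ones: since $\lambda^1_b + \lambda^2_b = \lambda_b \leq 1$, every $1$-marked $b$ satisfies $\lambda^1_b = 1$ or $\lambda^2_b = 1$ but not both, hence lies in exactly one of the reduced posets $\overline{(\mathcal{P}, A, \lambda^1)}$, $\overline{(\mathcal{P}, A, \lambda^2)}$. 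The paper then shows $\overline{(\mathcal{P}, A, \lambda)} = \overline{(\mathcal{P}, A, \lambda^1)} \cup \overline{(\mathcal{P}, A, \lambda^2)}$; the nontrivial inclusion is proved by taking a minimal element $x$ of $\overline{(\mathcal{P}, A, \lambda)}$ in neither piece and observing that $\mathbf{e}_x \in S_{\mathcal{C}}(\lambda)$ while every element of $S_{\mathcal{C}}(\lambda^1) + S_{\mathcal{C}}(\lambda^2)$ vanishes at $x$ --- a contradiction. Disjointness of the two pieces and the absence of Hasse edges between them (both consequences of additivity) then yield the disconnection. To salvage your approach you would need to (i) adopt the additive notion of decomposition and (ii) use it to extend your colouring from $V_\mu \sqcup V_\tau$ to the $1$-marked and forced-zero vertices; at that point your argument essentially becomes the paper's.
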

\begin{proof}
Suppose $\lambda$ is not $\mathcal{C}$-indecomposable, then $\lambda = \lambda^1 + \lambda^2$, both are non-zero and 
\[
S_{\mathcal{C}}(\lambda) = S_{\mathcal{C}}(\lambda^1) + S_{\mathcal{C}}(\lambda^2).
\]
We can naturally view $\overline{(\mathcal{P}, A, \lambda^j)} \subset \overline{(\mathcal{P}, A, \lambda)}$ as a subposet, especially as a subset. Let $a \in A$ with $a \in \overline{(\mathcal{P}, A, \lambda)}$, this is equivalent to $\lambda_a = 1$ and hence either $\lambda^1_a =1$ or $\lambda^2_a =1$, which implies $a \in \overline{(\mathcal{P}, A, \lambda^1)} \cup \overline{(\mathcal{P}, A, \lambda^2)}$. \\
Further, if $x \in \overline{(\mathcal{P}, A, \lambda^1)} \cup \overline{(\mathcal{P}, A, \lambda^2)}$, then $y \in \overline{(\mathcal{P}, A, \lambda^1)} \cup \overline{(\mathcal{P}, A, \lambda^2)}$ for all $x \prec y$. \\
Let $x$ be minimal such that $x \in \overline{(\mathcal{P}, A, \lambda)}$ but $x \notin \overline{(\mathcal{P}, A, \lambda^1)} \cup \overline{(\mathcal{P}, A, \lambda^2)}$. Then there is no $a \in A$ with $a \prec x$ and $\lambda_a =1$, and there is no $b \in A$ with $x \prec b$ and $\lambda_b = 0$. We set
\[
\bt^x_y = \begin{cases} 1 \text{ if } y = x \\ 0 \text{ else } \end{cases}
\]
Then $\bt^x \in S_{\mathcal{C}}(\lambda)$, since if $a \prec \ldots \prec x \prec \ldots \prec b$ is a chain, then $\lambda_b - \lambda_a = 1$. But since $x \notin \overline{(\mathcal{P}, A, \lambda^1)} \cup \overline{(\mathcal{P}, A, \lambda^2)}$, we have $\bs_x = 0$ for all $\bs \in S_{\mathcal{C}}(\lambda^1), S_{\mathcal{C}}(\lambda^2)$, this implies that $\bs_x = 0$ for all $\bs \in S_{\mathcal{C}}(\lambda^1) + S_{\mathcal{C}}(\lambda^2)$, which is a contradiction since by assumption $\bt^x \in S_{\mathcal{C}}(\lambda^1) + S_{\mathcal{C}}(\lambda^2)$. This implies that 
\[
\overline{(\mathcal{P}, A, \lambda)} =  \overline{(\mathcal{P}, A, \lambda^1)} \cup \overline{(\mathcal{P}, A, \lambda^2)}
\]
Thus we have shown that if $\lambda$ is not $\mathcal{C}$-indecomposable, then $\overline{(\mathcal{P}, A, \lambda)}$ is not connected. 
\end{proof}

\end{document}